\theoremstyle:=definition,remark,plain\do{%
        \expandafter\g@addto@macro\csname th@\theoremstyle\endcsname{%
            \addtolength\thm@preskip\parskip
            }%
        }
\theoremstyle{theorem}
\newtheorem{thm}{Theorem}[section]
\newtheorem{prop}[thm]{Proposition}
\newtheorem{lem}[thm]{Lemma}
\theoremstyle{definition}
\newtheorem{defn}[thm]{Definition}
\newtheorem{rmk}[thm]{Remark}
\newcommand{\trop}[1]{{#1}^{\text{trop}}}
\newcommand{\Aa}{\mathbb{A}}
\newcommand{\CC}{\mathbb{C}}
\newcommand{\PP}{\mathbb{P}}
\newcommand{\QQ}{\mathbb{Q}}
\newcommand{\RR}{\mathbb{R}}
\newcommand{\TT}{\mathbb{T}}
\newcommand{\VV}{\mathbb{V}}
\newcommand{\ZZ}{\mathbb{Z}}
\newcommand{\sO}{\mathcal{O}}
\DeclareMathOperator{\Spec}{Spec}
\DeclareMathOperator{\Hom}{Hom}
\DeclareMathOperator{\SL}{SL}
\title{The tropicalisation of a $(-2,0)$-flop}
\author{Tom Ducat}
\date{22 February 2022}               
\begin{document}

\maketitle

\begin{abstract}
As a standard example in toric geometry, the Atiyah flop of a $(-1,-1)$-curve in a smooth 3-fold can be described combinatorially in terms of the two possible triangulations of a square cone. The flop of $(-2,0)$-curve cannot be realised in terms of toric geometry. Nevertheless, we explain how to construct a cone $\sigma$ in an integral affine manifold with singularities associated to the singularity $(P\in Y)$ at the base of a $(-2,0)$-flop. The two sides of the flop can then be described combinatorially in terms of two different subdivisions of $\sigma$. As an interesting byproduct of our construction, we can build a singularity $(P^\star\in Y^\star)$ which is mirror to $(P\in Y)$.
\end{abstract}


\section{Introduction}


A \emph{flopping curve} in a smooth 3-fold $X$ is a smooth rational curve $C\cong \PP^1\subset X$ which admits a small contraction $f\colon (C\subset X) \to (P\in Y)$ and satisfies $K_X\cdot C=0$. We let $\phi\colon (C\subset X)\dashrightarrow (C'\subset X')$ denote the flop of $C$. By a theorem of Laufer, the normal bundle $\mathcal N_{C/X}$ can be	 one of the following three possibilities
\[ \sO_{C}(-1)\oplus\sO_{C}(-1), \quad \sO_{C}(-2)\oplus\sO_{C}, \quad \sO_{C}(-3)\oplus\sO_{C}(1) \]
and all three cases can occur. We call $\phi$ a \emph{$(-a,a-2)$-flop} and the flopping curve $C$ a \emph{$(-a,a-2)$-curve}, for $a=1,2,3$ accordingly. 

\subsection{The Atiyah flop} \label{sect!atiyah-flop}

The \emph{Atiyah flop} is a $(-1,-1)$-flop over the hypersurface singularity 
\[ (P\in Y) = \left( 0 \in \VV(x_1x_3 - x_2x_4) \subset \mathbb A^4_{x_1,x_2,x_3,x_4} \right) \] 
which is isomorphic to the affine cone over $\PP^1\times \PP^1$ in its Segre embedding. The two sides of the flop are given by blowing up either one of the Weil divisors $D_{12}=\VV(x_1,x_2)$ or $D_{23}=\VV(x_2,x_3)$ in $Y$. 

\paragraph{Toric picture.}
This flop has a very simple and appealing description in terms of toric geometry. Consider a 3-dimensional torus $\TT=N\otimes_\ZZ \CC^\times$ with cocharacter lattice $N \cong \ZZ^3$ and character lattice $M = \Hom(N,\ZZ)\cong \ZZ^3$. We consider the coordinates $x_1,x_2,x_3,x_4$ on $Y$ as monomials $x_i=u^{m_i}$ on $\TT$ corresponding to the vectors
\[ m_1 = (1,0,0), \: m_2 = (0,1,0), \: m_3 = (0,0,1), \: m_4 = (1,-1,1) \]
which span a square cone $\sigma^\star = \langle m_1,m_2,m_3,m_4\rangle$ in the vector space $M_\RR:=M\otimes \RR$. Then $Y = \Spec\CC[\sigma^\star\cap M]$ is the partial compactification of $\TT$ determined by the dual cone $\sigma = \langle n_{12},n_{23},n_{34},n_{41} \rangle$ in the dual space $N_\RR:=N\otimes \RR$, where the four generators
\[ n_{12} = (1,1,0), \: n_{23} = (0,1,1), \: n_{34} = (0,0,1), \: n_{41} = (1,0,0) \]
correspond to the four components $D_{ij} = \VV(x_i,x_j)$ of the boundary divisor $D=Y\setminus \TT$. The two small resolutions $X$ and $X'$ of $Y$ can now be obtained as the toric varieties associated to the fans obtained by subdividing $\sigma$ into two smooth triangular cones in one of the two possible ways; $\sigma = \sigma_{12}\cup\sigma_{34}$ or $\sigma = \sigma_{23}\cup\sigma_{41}$, where $\sigma_{ij}\subset \sigma$ is the subcone $\sigma_{ij} = \langle n_{kl} : kl\neq ij \rangle$. 
\begin{figure}[htbp]
\begin{center}
\begin{tikzpicture}[scale=1.5]
   \begin{scope} [xshift = 4cm]
   \draw[gray!50] (1/2,-1) -- (0,1) (1/2,-1) -- (1,1);
   \draw (1/2,-1) -- (0,0) (1/2,-1) -- (1,0);
   \draw (0,0) -- (1,0) -- (1,1) -- (0,1) -- cycle;
   \node at (0,0) {$\bullet$};
   \node at (1,0) {$\bullet$};
   \node at (1,1) {$\bullet$};
   \node at (0,1) {$\bullet$};
   \node at (2/7,5/7) {$\sigma_{41}$};
   \node at (5/7,2/7) {$\sigma_{23}$};
   \node at (1/2,-1) {\scriptsize $\bullet$};
   \draw (0,0) -- (-0.1,0.2);
   \draw (1,0) -- (1.1,0.2);
   \draw (0,1) -- (-0.05,1.2);
   \draw (1,1) -- (1.05,1.2);
   
   \draw (0,0) -- (1,1);
   \end{scope}
   
   \draw[<->,dashed] (2,0.5) to node[midway,above] {flop} (3,0.5);
   
   \begin{scope} [xshift = 0cm]
   \draw[gray!50] (1/2,-1) -- (0,1) (1/2,-1) -- (1,1);
   \draw (1/2,-1) -- (0,0) (1/2,-1) -- (1,0);
   \draw (0,0) -- (1,0) -- (1,1) -- (0,1) -- cycle;
   \node at (0,0) {$\bullet$};
   \node at (1,0) {$\bullet$};
   \node at (1,1) {$\bullet$};
   \node at (0,1) {$\bullet$};
   \node at (2/7,2/7) {$\sigma_{34}$};
   \node at (5/7,5/7) {$\sigma_{12}$};
   \node at (1/2,-1) {\scriptsize $\bullet$};
   \draw (0,0) -- (-0.1,0.2);
   \draw (1,0) -- (1.1,0.2);
   \draw (0,1) -- (-0.05,1.2);
   \draw (1,1) -- (1.05,1.2);
   
   \draw (0,1) -- (1,0);
   \end{scope}
\end{tikzpicture}
\caption{The combinatorial picture of the Atiyah flop.}
\label{fig!atiyah-flop}
\end{center}
\end{figure}
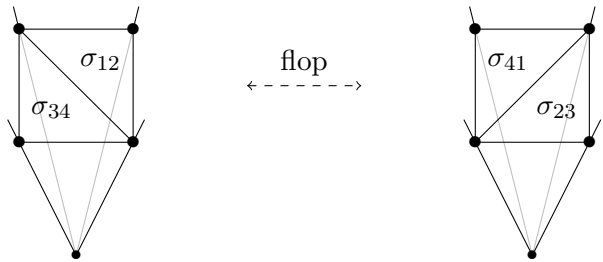
Indeed, the small resolution $X$ is given by gluing together the two smooth affine charts $X=\Spec\CC[\sigma_{12}^\star\cap M] \cup \Spec\CC[\sigma_{34}^\star\cap M]$ and similarly for $X'$.

\subsection{Other flops}

Since all $(-1,-1)$-flops are locally analytically isomorphic to the Atiyah flop, this construction gives a complete description of the local behaviour of any such flop. There are, however, no examples of toric $(-2,0)$-flops or $(-3,1)$-flops. Indeed, in these cases a torus-invariant curve in a smooth toric 3-fold $X$ with such a normal bundle corresponds to a union of two cones in the fan of $X$ which is not strictly convex. In this paper we will extend the combinatorial picture of Figure~\ref{fig!atiyah-flop} to the case of a $(-2,0)$-flop by allowing $\sigma$ to be a cone in an integral affine manifold with singularities. 

\paragraph{The classification of $(-2,0)$-flops.}
For a 3-fold flopping curve $C\subset X$, Reid \cite[Definition~5.3]{reid} introduced a discrete invariant 
\[ w(C) = \max\left\{ n : C\subset C_n\subset X  \text{ where } C_n\cong C\times \Spec\CC[t]/(t^n) \right\} \]
called the \emph{width} of $C$. The curve $C$ is a $(-1,-1)$-curve if and only if $w(C)=1$. Moreover, if $C$ is a $(-2,0)$-curve of width $w(C)=n$ then by \cite[(5.13.b)]{reid} the flop of $C$ is locally analytically isomorphic to one of a discrete family of flops $\phi_n\colon X\dashrightarrow X'$ over the hypersurface singularity 
\[ Y = \VV(xz - y(y+t^n)) \subset \Aa^4_{x,y,z,t}. \]
As with the Atiyah flop before, the two sides of the flop can be obtained by blowing up either one of the Weil divisors $D_1=\VV(x,y)$ or $D_2=\VV(y,z)$ inside $Y$. If we allow the case $n=1$ then we see that this construction includes the Atiyah flop, albeit presented in non-toric coordinates.


\subsection{Generalising toric geometry} \label{sect!generalising}

The toric construction of the Atiyah flop relies on identifying a dense open torus $\TT\subset Y$, after which the theory of toric geometry reduces the geometry of the space $Y$ to the combinatorial data of a cone $\sigma$ defined in the cocharacter space $N_\RR$ of $\TT$. The integral points of $\sigma$ correspond to torus-invariant divisorial valuations with centre on $Y$. Moreover, we have an associated dual cone $\sigma^\star$ in the character space $M_\RR$ of $\TT$, whose integral points correspond to the monomials in the ring $\CC[Y]$ forming a distinguished additive basis of $\CC[Y]$.   

\paragraph{Integral affine manifolds with singularities.}
The \emph{Gross--Siebert program} is concerned with understanding mirror symmetry through toric degenerations. Part of the technology developed in the course of this program 
allows one to generalise the machinery of toric geometry to study compactifications of a positive, maximal log Calabi--Yau variety~$U$. To do this we replace the real manifold $N_\RR$ with an \emph{integral affine manifold with singularities} $\trop U$, called the \emph{tropicalisation of $U$}. By definition, this is a real manifold $\trop U$ with a distinguished set of integral points $\trop U(\ZZ)\subset \trop U$ (playing the role of the lattice $N\subset N_\RR$) and a given subset $\Delta\subset \trop U$ of codimension at least 2, called the \emph{singular locus} of $\trop U$. The complementary open subset $\trop U \setminus \Delta$ now has an integral affine structure for which the set of integral points is given by $\trop U(\ZZ)\setminus \Delta$, however this integral affine structure cannot be extended to the whole of $\trop U$ in general. From the way in which $\trop U(\ZZ)$ is defined \cite[Definition~1.7]{ghk} the integral points of $\trop U$ correspond to divisorial valuations along boundary components in compactifications of $U$.

The analog of the character space $M_\RR$ in this situation is conjecturally given by the tropicalisation of a \emph{mirror} log Calabi--Yau variety $V=U^\star$ \cite[Conjecture~1.1]{hk}. This is a conjectural duality between deformation families of positive, maximal log Calabi--Yau varieties that extends the usual duality $\TT \leftrightarrow \TT^\star=M\otimes_\ZZ \CC^\times$ of tori, which comes from exchanging the roles of the two lattices $N$ and $M$. Thus the integral points $\trop V(\ZZ)\subset \trop V$ are expected to correspond to an certain additive basis of $\CC[U]$, called the basis of \emph{theta functions}, and vice-versa. One approach to understanding the mirror $V$ is to build the coordinate ring $\CC[V]$ directly from the tropicalisation $\trop U$, by equipping it with the structure of a scattering diagram.

\paragraph{Cluster varieties.}
In our case we identify a particularly simple affine log Calabi--Yau variety $U$ which is a Zariski open subset $U\subset Y$ in the base of a $(-2,0)$-flop. This $U$ is a \emph{cluster variety}, in the sense of \cite{ghk,ghkk,hk}, and can be described as a certain type of blowup of a toric variety. Indeed, continuing with notation $\TT$, $N$, $M$ etc., introduced above, then there is a pair $(n,m)\in N\times M$ such that $U$ is obtained by blowing up $(T,B)$, a toric compactification of $\TT$, along the curve $Z= \VV(1+u^m)$ in the boundary divisor $B_n\subset T$. 

Since $U$ is positive it has a mirror given by the \emph{Fock--Goncharov dual cluster variety} $V=U^\star$ \cite[Proposition~0.7]{ghkk}. In this case $V$ is obtained from the data of a pair of primitive integral vectors $(m,-n)\in M\times N$ by blowing up $(T^\star,B^\star)$, a toric compactification of $\TT^\star$, along the curve $Z^\star= \VV(1+v^{-n})$ in the boundary divisor $B^\star_m\subset T^\star$. In particular, since $U$ and $V$ are both given by blowing up a smooth irreducible reduced curve in the boundary of a toric 3-fold, by \cite[Remark~3.5]{hk} they are both isomorphic to $W\times \CC^\times$, where $W$ is the two-dimensional cluster variety obtained by the nontoric blowup of a single reduced point in the boundary of a toric surface.

The two spaces $\trop U$ and $\trop V$ are now dual to one another under a pairing $\langle{\cdot},{\cdot}\rangle\colon \trop U\times \trop V\to \RR$ which extends the pairing $\langle{\cdot},{\cdot}\rangle\colon \trop U(\ZZ)\times \trop V(\ZZ)\to \ZZ$ given by $\langle u,v \rangle=\nu_u(\vartheta_v) = \nu_v(\vartheta_u)$, where we can interpret $u\in \trop U(\ZZ)$ as a divisorial valuation $\nu_u\colon \CC(U)^\times\to \ZZ$ and $v\in \trop V(\ZZ)$ as a regular function $\vartheta_v\in \CC[U]$, or vice-versa.\footnote{In general the equality $\nu_u(\vartheta_v) = \nu_v(\vartheta_u)$ is a conjecture \cite[Remark~9.11]{ghkk} which has been proven by Mandel \cite{man} for two-dimensional cluster varieties. However, by the same remark in \cite{ghkk}, in our case it follows from the fact that the $U$ we work with has cluster complex equal to the whole of $\trop U$ (and similarly for $V$).} Using this dual pairing, Mandel \cite{man} has generalised many of the notions of toric geometry to this setting, particularly in the 2-dimensional case.

\subsection{Outline of the paper} \label{sect!strategy}

As a (fun) application of this theory, we study $(-2,0)$-flops in an analogous way to the toric treatment of the Atiyah flop, beginning by identifying a divisor $D\subset Y$ such that the complement is a cluster variety $U=Y\setminus D$. In \S\ref{sect!2} we then construct the tropicalisation $\trop U$ together with the convex cone $\sigma\subset \trop U$ generated by the integral points $d_i\in \trop U(\ZZ)$ that correspond to irreducible components $D_i\subset D$. We find that, for a unique choice of toric model for $U$ (which fixes the singular locus of $\trop U$), there are exactly two ways to subdivide $\sigma$. In Theorem~\ref{thm!20-flop} we show that these two subdivisions give the dual intersection complex of the pair $(X,\widetilde D)$, where $f\colon X\to Y$ is one side of the $(-2,0)$-flop and $\widetilde D$ is the strict transform of $D$ under $f$. Thus we interpret these two subdivisions as the combinatorial analogue of the two subdivisions of the toric cone shown in Figure~\ref{fig!atiyah-flop}.

In \S\ref{sect!pagoda} we show that we can resolve $\sigma\subset \trop U$ into smooth cones by subdividing along interior rays (the analogue of resolving a toric singularity by toric blowups). As seen in Theorem~\ref{thm!pagoda}, this recovers \emph{Reid's pagoda} -- a certain resolution of $(-2,0)$-flop considered by Reid \cite[\S6]{reid}.

In \S\ref{sect!dual-side} we construct the dual cone $\sigma^\star$ in the dual integral affine manifold $\trop V$ and describe the pairing between $\trop U$ and $\trop V$. For either one of our given subdivisions $\sigma=\tau_1\cup \tau_2$, precisely one of the two components ($\tau_1$ say) is a convex cone in $\trop U$ and we show that the affine variety $\Spec \CC[\tau_1^\star\cap \trop V(\ZZ)]$ gives one of the affine patches on one side of the flop. Unfortunately at this point the analogy with the toric picture breaks down, since we do not know how to recover the other affine patch of the flop this way.

Lastly, in \S\ref{sect!mirror} we note that as a consequence of construction, we can construct a singularity $(P^\star\in Y^\star)$ as a compactification of $V$, which is mirror to $(P\in Y)$. This mirror singularity is obtained by exchanging the role of the two cones $\sigma\subset \trop{U}$ and $\sigma^\star\subset \trop V$. Perhaps surprisingly, $Y^\star$ turns out to be a nonisolated affine 3-fold singularity. In Theorem~\ref{thm!mirror-thm} we give a presentation for the coordinate ring $\CC[Y^\star]$ and produce a crepant resolution of $(P^\star\in Y^\star)$. We end with a speculative question about the relationship between $Y$ and $Y^\star$.

\paragraph{Generalisations.}
Unfortunately the methods employed in this paper do not seem well-suited to the much more interesting case of $(-3,1)$-flops. In these cases a `general elephant' (i.e.\ a general anticanonical divisor) in the base of the flopping contraction $(P\in Y)$ has a Du Val singularity of type $D$ or $E$ (rather than type $A$). However, given the known connections between cluster algebras and Mori flips \cite{duc} it seems highly likely that one could study 3-dimensional flips and divisorial contractions with a type $A$ elephant from a similar point of view. We hope that there will also be applications in higher dimensions.

\paragraph{Acknowledgements.}
In the course of preparing this paper I had helpful conversations with Gavin Brown, Jonny Evans and Helge Ruddat, and I am grateful for the improvements suggested by the anonymous referee.

\section{The tropicalisation of the base of a $(-2,0)$-flop} \label{sect!2}

Recall that the base of a $(-2,0)$-flop is given by the affine hypersurface singularity 
\[ (P\in Y) = \left(0\in \VV(xz-y(y+t^n))\subset \Aa^4_{x,y,z,t}\right). \]
We want to construct the tropicalisation of $Y$ giving $\sigma\subset \trop U$, a cone in an integral affine manifold with singularities, according to the strategy outlined in \S\ref{sect!strategy}.

\subsection{The log Calabi--Yau variety $U$}

We first describe our choice of log Calabi--Yau open subset $U\subset Y$.

\begin{lem}\label{lem!resolution}
Consider the reduced effective divisor $D = \VV(yt)\subset Y$ with three irreducible components $D=\sum_{i=1}^3D_i$, given by
\[ D_1 = \VV(x,y), \quad D_2 = \VV(y,z) \quad \text{and} \quad D_3 = \VV(xz - y^2, t). \]
The complement $U=Y\setminus D$ is a log Calabi--Yau variety and $(Y,D)$ is a dlt partial compactification of $U$ with a maximal centre.
\end{lem}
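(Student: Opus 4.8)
The plan is to establish the two assertions in turn: that $K_Y+D\sim 0$ (so that $U$ is log Calabi--Yau), and that $(Y,D)$ is dlt with the point $P=\VV(x,y,z,t)$ as its unique zero-dimensional, hence maximal, centre. First I would pin down the components of $D=\VV(yt)$. Setting $y=0$ in $xz=y(y+t^n)$ forces $xz=0$, so $\VV(y)\cap Y=D_1\cup D_2$; setting $t=0$ gives $xz=y^2$, so $\VV(t)\cap Y=D_3$. Since $Y$ is a hypersurface in $\Aa^4$ it is Gorenstein with $K_Y\sim 0$, a trivialising section being the Poincar\'e residue
\[ \eta=\operatorname{Res}_Y\!\left(\frac{dx\wedge dy\wedge dz\wedge dt}{xz-y(y+t^n)}\right), \]
which is regular and nowhere vanishing on the smooth locus. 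A direct check shows that $y$ and $t$ each vanish to order one along their respective components, so $D=\operatorname{div}_Y(yt)$ is a reduced principal (in particular Cartier) divisor and $D\sim 0$. Hence $K_Y+D\sim 0$, witnessed concretely by the nowhere-zero volume form $\omega_U=\eta/(yt)$ on $U$, which has at worst log poles along $D$; this exhibits $U$ as log Calabi--Yau.

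Next I would analyse the singularities away from the deepest stratum. The Jacobian criterion applied to $xz-y(y+t^n)$ shows that $Y$ is normal (Serre's criterion, $Y$ being a hypersurface) with a single singular point $P$, a compound Du Val $cA_n$ point, which is canonical. The pairwise intersections are the coordinate axes
\[ D_1\cap D_2=\VV(x,y,z),\quad D_1\cap D_3=\VV(x,y,t),\quad D_2\cap D_3=\VV(y,z,t), \]
and at a general point of each axis one checks that $Y$ is smooth and that the two relevant components are smooth divisors meeting transversally, so $(Y,D)$ is log smooth on $Y\setminus\{P\}$. The triple intersection $D_1\cap D_2\cap D_3$ is the reduced point $P$, giving the unique zero-dimensional stratum and identifying $P$ as the candidate \emph{maximal centre}.

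The main obstacle is the local analysis at $P$, where the pair fails to be log smooth and the genuinely non-toric nature of the singularity is concentrated. Because $K_Y+D\sim 0$, the discrepancy of any divisor $E$ over $Y$ can be read off from the order of vanishing of the log volume form $\omega_U$ along $E$, so to settle the dlt property it suffices to produce a single resolution over $P$ and track $\omega_U$ along its exceptional divisors. Concretely I would pass to the small crepant resolution $f\colon X\to Y$ obtained by blowing up $D_1$ (one side of the flop): as $f$ is an isomorphism in codimension one and $Y$ is Gorenstein, $K_X+\widetilde D=f^*(K_Y+D)\sim 0$ with $\widetilde D$ the strict transform, so discrepancies over $X$ and over $Y$ agree, and a chartwise computation of $\widetilde D$ describes how $\widetilde D_1,\widetilde D_2,\widetilde D_3$ meet along the flopping curve $C=f^{-1}(P)$. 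For $n\ge 2$ the component $\widetilde D_1$ acquires a surface singularity over $P$, so to finish I would resolve further along $C$ as in Reid's pagoda of \S\ref{sect!pagoda} and track $\omega_U$ there. Carrying out this computation to confirm log-canonicity and to verify that no centre deeper than $P$ is extracted is where essentially all of the content of the lemma lies.
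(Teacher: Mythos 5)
Your first half (that $U$ is log Calabi--Yau) is correct but takes a different route from the paper: you argue by adjunction, noting $K_Y\sim 0$ for the hypersurface $Y$ and $D=\operatorname{div}_Y(yt)$ principal and reduced, so that the residue form divided by $yt$ is a nowhere-vanishing volume form on $U$. The paper instead exhibits $U$ as a cluster variety covered up to codimension~2 by two torus charts glued along a volume-preserving mutation, and extends the torus volume forms. Both are valid; yours is more classical, the paper's sets up the cluster-variety viewpoint that the rest of the paper relies on.

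The second half has a genuine gap. You correctly identify the strategy the paper uses --- pass to the small crepant resolution $f\colon X\to Y$, where $K_X+\widetilde D=f^*(K_Y+D)$, and examine the pair $(X,\widetilde D)$ --- but you stop exactly where the work happens, conceding that ``carrying out this computation \dots is where essentially all of the content of the lemma lies.'' The paper finishes at this point with a short explicit chart computation: $X$ is covered by $\Aa^3_{\beta,z,t}$ and $\Aa^3_{\alpha,x,t}$, on which $\widetilde D$ becomes $\VV(\beta zt)$ (three coordinate hyperplanes, snc, with a $0$-dimensional stratum) and $\VV(t(x\alpha-t^n))$ (a hyperplane meeting a surface with an $A_{n-1}$ singularity along its toric boundary strata), and both are standard dlt pairs with a maximal centre. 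In particular your proposed continuation --- resolving further along $C$ via the pagoda and tracking discrepancies of $\omega_U$ on a full log resolution --- is both unperformed and heavier than necessary: dlt-ness does not require log smoothness at the deepest stratum, only that the pair be snc away from a closed set over which all discrepancies exceed $-1$, and this is read off directly from the two local models above. As written, the decisive verification at $P$ (equivalently, along $C$) is missing.
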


\begin{proof}
We note that $U$ is a \emph{cluster variety} in the sense of \cite{ghk,ghkk}. In particular, $U$ contains two dense open torus charts $\TT_1=(\CC^\times)^3_{x,y,t}$ and $\TT_2=(\CC^\times)^3_{y,z,t}$ which are glued together by a \emph{mutation} $\mu\colon \TT_1\dashrightarrow \TT_2$, given by $\mu^*(y,z,t) = \left(\frac{y(y+t^n)}{z},y,t\right)$. The mutation is volume-preserving, in the sense that $\mu^*\Omega_{\TT_2} = \Omega_{\TT_1}$ for the volume forms $\Omega_{\TT_1} = \frac{dx}{x}\wedge \frac{dy}{y}\wedge \frac{dt}{t}$ and $\Omega_{\TT_2} = \frac{dy}{y}\wedge \frac{dz}{z}\wedge \frac{dt}{t}$ on the two torus charts. Since $\TT_1\cup \TT_2$ covers $U$ up to a set of codimension 2 and $U$ is smooth, these volume forms extend to give a nonvanishing volume form $\Omega_U\in H^0(U,K_U)$ on the whole of $U$. Thus $U$ is a log Calabi--Yau variety.

To see that $(Y,D)$ is a dlt pair with a maximal centre we can pass to the small resolution 
\[ f\colon X = \VV\left( y\alpha - z\beta, \: x\alpha - (y+t^n)\beta \right)\subset \Aa^4\times \PP^1_{\alpha:\beta} \to Y, \] 
and consider the pair $(X,\widetilde D)$, where $\widetilde D$ is the strict transform of $D$. For the two affine patches that cover $X$ we have 
\[ \left.\left(X,\widetilde D\right)\right|_{\alpha=1} \cong \left(\Aa^3_{\beta,z,t}, \; \VV(\beta zt)\right) \quad \text{and} \quad \left.\left(X,\widetilde D\right)\right|_{\beta=1} \cong \left(\Aa^3_{\alpha,x,t}, \; \VV(t(x\alpha - t^n))\right). \]
The boundary divisor in the first patch is the union of the three coordinate planes, whilst the boundary of the second patch is the union of a plane with an $A_{n-1}$ singularity, meeting transversely along their toric boundary strata. Both are well-known examples of 3-dimensional dlt pairs with a maximal centre.
\end{proof}

\subsection{The tropicalisation $U^{\text{trop}}$}

\paragraph{A toric model for $U$.}
To understand the tropicalisation of $U$ we begin by exhibiting a toric model for $U$. In other words, we give a description of $U$ in terms of the blowup of a toric variety. To fix some notation we let $N\cong \ZZ^3$ be the cocharacter lattice of the torus $\TT=(\CC^\times)^3_{x,y,t}$ and we let $M=\Hom(N,\ZZ)$ be the character lattice, where $x=u^{(1,0,0)}$, $y=u^{(0,1,0)}$ and $t=u^{(0,0,1)}$. In any toric compactification of $\TT$ we let $B_v$ denote the toric boundary divisor corresponding to a primitive vector $v\in N$.

\begin{lem}
Consider the toric variety $T=\Aa^1_x\times(\CC^\times)^2_{y,t}$ with dense torus $\TT$ and toric boundary divisor $B:=B_{(1,0,0)}=\VV(x)$. Let $Z$ be the curve $Z=\VV(x,y+t^n)\subset B\subset T$ and let $f\colon (\widetilde T,\widetilde B)\to (T,B)$ be the blowup of $T$ along $Z$, where $\widetilde B$ is the strict transform of $B$. Then $U$ is isomorphic to $\widetilde T\setminus \widetilde B$.
\end{lem}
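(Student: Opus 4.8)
The plan is to compute the blowup $\widetilde T$ as an explicit hypersurface in $T\times\PP^1$, to observe that $\widetilde T\setminus\widetilde B$ is covered by a single affine chart, and then to write down mutually inverse morphisms between that chart and $U$, the inverse being read off from the defining equation of $Y$.

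First I would compute the blowup. As $Z=\VV(x,y+t^n)$ is cut out by the two functions $x$ and $y+t^n$, its blowup sits inside $T\times\PP^1_{\alpha:\beta}$ as
\[ \widetilde T=\VV\bigl(x\beta-(y+t^n)\alpha\bigr), \]
where $[\alpha:\beta]=[x:y+t^n]$ away from $Z$. Over $B\setminus Z$ one has $y+t^n\neq0$, so the strict transform of $B=\VV(x)$ is $\widetilde B=\VV(x,\alpha)$. The point to extract is that every point of $\widetilde T$ with $\alpha=0$ already has $x=0$ (the equation forces $x\beta=0$, and $\alpha=0$ means $\beta\neq0$), so $\VV(\alpha)\cap\widetilde T=\widetilde B$, and therefore
\[ \widetilde T\setminus\widetilde B=\{\alpha\neq0\}\cap\widetilde T. \]
On this chart $\alpha=1$ and $(x,\beta,t)$ are coordinates, subject only to $t\neq0$ and $y=x\beta-t^n\neq0$ (so that the image point lies in $T$).

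Next I would exhibit the isomorphism. Using $y=x\beta-t^n$ and recovering the remaining coordinate $z$ from $[y:z]=[\alpha:\beta]=[1:\beta]$, the chart maps to $\Aa^4$ by
\[ (x,\beta,t)\longmapsto\bigl(x,\ x\beta-t^n,\ \beta(x\beta-t^n),\ t\bigr). \]
A direct check shows the image satisfies $xz=y(y+t^n)$ with $y,t\neq0$, so it lands in $U$. For the inverse I would use the key fact that the defining equation $xz=y(y+t^n)$ of $Y$ is precisely the condition $[x:y+t^n]=[y:z]$; since $y\neq0$ on $U$, this lets me set $\beta=z/y$, giving a morphism $U\to\widetilde T\setminus\widetilde B$, $(x,y,z,t)\mapsto(x,z/y,t)$. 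One then verifies $x\beta-t^n=xz/y-t^n=y$ and $\beta(x\beta-t^n)=z$, so the two maps are two-sided inverses.

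The step I expect to require the most care is the bookkeeping over the divisor $B$. Over $B\setminus Z$ there are no points of $U$ at all (there $x=0$ but $y+t^n\neq0$, contradicting $xz=y(y+t^n)$ with $y\neq0$), which is exactly why those points are swept into $\widetilde B$ and deleted; over $Z$, by contrast, the fibre of the projection $U\to T$ jumps from a single point to a whole line $\Aa^1_z$, which matches the exceptional $\PP^1$ of the blowup with its one point on $\widetilde B$ removed. Getting this matching right --- equivalently, checking that the relation $[x:y+t^n]=[y:z]$ correctly resolves the indeterminacy of $[x:y+t^n]$ along $\VV(x,y+t^n)\cap U$ --- is the crux; the chart computation around it is routine.
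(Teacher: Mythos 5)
Your proof is correct and follows essentially the same route as the paper's: both identify $U$ with the $\{\alpha\neq 0\}$ chart of the blowup via the coordinate $\beta=z/y$, using that the equation $xz=y(y+t^n)$ is exactly the proportionality $[x:y+t^n]=[y:z]$. You simply run the computation in the opposite direction (from the blowup down to $U$ rather than projecting $U$ onto $T$) and spell out more explicitly why the deleted locus is precisely $\widetilde B=\VV(\alpha)\cap\widetilde T$, which the paper leaves implicit.
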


\begin{proof}
Since $y\neq0$ on $U$, we can write $U=\left\{x\cdot \frac{z}{y} = y+t^n\right\}\subset \Aa^2_{x,z}\times(\CC^\times)^2_{y,t}$ and consider the projection $\pi\colon U\to T$. This is a birational map from $U$ onto $T$ with image $\pi(U) = \TT\cup Z$. This is an isomorphism over $\TT$ and which blows down the divisor $E=\VV(x,y+t^n)\subset U$ onto $Z$. From the equation defining $U$, we see that $U$ is isomorphic to a Zariski open subset of the blowup of $Z\subset T$.
\end{proof}

\paragraph{The tropicalisation $\trop U$.}
We can now use this description of $U$ to construct the tropicalisation $\trop{U}$, by redefining the integral affine structure on $\trop{\TT}= N_\RR\cong\RR^3$ according to this nontoric blowup. To do this consider the plane $H=(0,-1,n)^\perp\subset N_\RR$, the line $\ell=\RR(0,n,1)\subset H$, the open halfplanes $H^\pm=\{ h\in H : \pm\langle h,(1,0,0)\rangle > 0 \}$ and the closed halfplanes ${\overline H}^\pm=\{ h\in H : \pm\langle h,(1,0,0)\rangle \geq 0 \}$. 

\begin{rmk}\label{rmk!trop-not-unique}
We note that, despite the potentially misleading notation, the tropicalisation $\trop U$ is not canonically determined by $U$. Instead it depends on the choice of toric compactification $(T,B)$ used in the toric model for $U$. We will make one such choice in Proposition~\ref{prop!Utrop} and then describe how the integral affine structure on $\trop U$ changes as the choice of toric model changes in Remark~\ref{rmk!moving-ell}.
\end{rmk}

\begin{prop} \label{prop!Utrop}
The tropicalisation $\trop{U}$ (with respect to the toric compactification $(T,B)$ described in the proof below) is given by the integral affine manifold with singularities obtained by redefining the integral affine structure on $N_\RR\setminus \ell$ so that the tangent vector of a straight line bends by the matrix
\[ M_\ell = \begin{pmatrix}
1 & 1 & -n \\
0 & 1 & 0 \\
0 & 0 & 1
\end{pmatrix} \]
as it passes through $H^+$ in the direction of the normal vector $(0,-1,n)$. In particular the singular locus of $\trop{U}$ is given by the line $\ell$ and $M_\ell\in\SL(3,\ZZ)$ is the monodromy around $\ell$. Moreover we can identify the integral points $\trop U(\ZZ)\subset \trop U$ with the lattice $N\subset N_\RR$.
\end{prop}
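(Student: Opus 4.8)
The plan is to read the modified affine structure off the toric model directly, exploiting the two dense torus charts $\TT_1=(\CC^\times)^3_{x,y,t}$ and $\TT_2=(\CC^\times)^3_{y,z,t}$ that (by the proof of Lemma~\ref{lem!resolution}) cover $U$ away from a set of codimension $2$. Each $\TT_i$ is a copy of the dense torus, so its cocharacter space is a copy of $N_\RR$ and, by the description of $\trop{U}(\ZZ)$ as a set of divisorial valuations (\cite[Definition~1.7]{ghk}), gives a genuine ``toric'' integral affine chart on an open part of $\trop{U}$. The whole of $\trop U$ is then assembled by gluing these two charts along the tropicalisation of the birational change of coordinates between them, so the first and main computational step is to tropicalise that gluing.

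Fixing $N_\RR$ to be the cocharacter space of $\TT_1$, a valuation $w=(w_1,w_2,w_3)$ is transported to the $\TT_2$-chart using the defining equation $xz=y(y+t^n)$, which gives $z=y(y+t^n)/x$. Applying $w$ (with the order-of-vanishing convention $\nu(f+g)=\min(\nu f,\nu g)$) yields the piecewise-linear transition
\[ \trop{\mu}(w_1,w_2,w_3)=\bigl(w_2,\; w_2+\min(w_2,nw_3)-w_1,\; w_3\bigr). \]
This is integral-linear on each of the two half-spaces cut out by the sign of $\langle m,\cdot\rangle$, where $m=(0,-1,n)$ is the exponent in the binomial $y^{-1}(y+t^n)=1+u^{m}$ defining the blown-up curve $Z$; the corner locus is the hyperplane $H=m^\perp$. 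Comparing the two linear pieces shows that the two charts differ by exactly the transvection $w\mapsto w-\langle m,w\rangle(1,0,0)$, which a direct matrix computation identifies, up to orientation, with $M_\ell$. Being unipotent it lies in $\SL(3,\ZZ)$, it fixes $H$ pointwise, and it shears in the direction $(1,0,0)$ along the covector $(0,-1,n)$, matching the statement.

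Next I would pin down the singular locus. The curve $Z=\{y=-t^n\}\subset B$ has recession direction $(0,n,1)$, so its tropicalisation is the line $\ell=\RR(0,n,1)\subset H$; since $Z$ has codimension $2$ in $T$, this $\ell$ is the expected codimension-$2$ singular locus of $\trop{U}$. To match the statement I would then take the branch cut to be the closed half-plane $\overline{H}^+$ bounded by $\ell$ (the side $w_1\geq 0$, which is the side that ``sees'' the boundary divisor $B=\VV(x)$), so that crossing $\overline{H}^+$ in the direction of $(0,-1,n)$ applies $M_\ell$ and a small loop linking $\ell$ has monodromy $M_\ell$. Finally, since $M_\ell$ is an integral transvection it preserves the lattice $N$, so the integral points supplied by the two toric charts glue to a single copy of $N\subset N_\RR=\trop U$, giving $\trop U(\ZZ)=N$.

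The step I expect to be the main obstacle is proving that the singular locus is exactly $\ell$ rather than the full bending hyperplane $H$: the naive two-chart gluing above bends across all of $H$, and one must show that this apparent bending is removable along $H^-=H\setminus\overline{H}^+$, i.e.\ on the part of $H$ lying away from $B$. Equivalently, one must exhibit a smooth integral affine chart around each point of $H^-$; such a chart cannot come from either torus chart but reflects the fact that the nontoric blowup $\widetilde T\to T$ is an isomorphism away from the centre $Z$. I would supply these charts either from an auxiliary toric model (using the flexibility recorded in Remark~\ref{rmk!moving-ell}) or by a direct valuation argument, after which the monodromy computation above concentrates all nontriviality of the affine structure on $\ell$.
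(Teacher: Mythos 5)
Your computation of the shear is correct and is a legitimate alternative derivation of $M_\ell$: tropicalising $z=y(y+t^n)/x$ with the $\min$-convention gives a transition between the two cluster charts that is linear on either side of $H=(0,-1,n)^\perp$, and the two linear pieces differ by the transvection $w\mapsto w-\langle(0,-1,n),w\rangle(1,0,0)$, which is $M_\ell$. The identification $\trop U(\ZZ)=N$ is also fine. However, the argument has a genuine gap, and it is exactly the one you flag yourself: gluing the two torus charts $\TT_1,\TT_2$ along the tropicalised mutation does not produce the integral affine structure asserted in the statement. That gluing is only a piecewise-linear identification of two copies of $\RR^3$ whose corner locus is all of $H$; by itself it carries no information about where straight lines actually bend, and if read naively it would put the bending along the whole hyperplane $H$, i.e.\ a codimension-one singular locus, which is not what the proposition claims. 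The entire content of the statement is that, for the particular toric compactification $(T,B)$, the affine structure is the standard one across $H^-$ and bends by $M_\ell$ only across $H^+$, so that the singularity concentrates on $\ell$. Your proposal defers precisely this step to ``an auxiliary toric model or a direct valuation argument,'' which is to defer the proof itself. Note also that the location of $\ell$ is not intrinsic to $U$ (Remark~\ref{rmk!moving-ell}: it can be moved to any $\RR(a,nb,b)$), so ``the tropicalisation of $Z$ is $\RR(0,n,1)$'' cannot be justified without first fixing the compactification; your heuristic that $\overline H^+$ is ``the side that sees $B=\VV(x)$'' is pointing in the right direction but is not an argument.

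The paper closes this gap by a different mechanism. It chooses the toric model $T\to(\PP^1)^3$ obtained by subdividing along the rays $\RR_{\geq0}(0,\pm k,\pm1)$, $k=1,\dots,n$, so that the strict transform of $Z$ meets the toric boundary transversely in the divisors $B_{\pm(0,n,1)}$. The nontoric blowup of $Z$ then changes the normal bundles of exactly the two torus-invariant curves $C_\pm=B_{(1,0,0)}\cap B_{\pm(0,n,1)}$, whose corresponding two-dimensional cones $\langle(1,0,0),\pm(0,n,1)\rangle$ are precisely the walls covering $\overline H^+$. Applying the standard rule relating the normal bundle $\sO(-a,-b)$ of a torus-invariant curve to the relation $v_4=av_2+bv_3-v_1$ among adjacent ray generators, and imposing the new normal bundle data $c=d=1$, forces the wall-crossing matrix across those two walls (and only those) to be $M_\ell$; the apparent singularity along the ray $\RR_{\geq0}(1,0,0)$ is then checked to be removable. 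This is the step your approach is missing: a local model for the affine structure at each wall of $H$ that distinguishes $H^+$ (where the blown-up curve lives) from $H^-$ (where the blowup is an isomorphism). Without it, the monodromy computation alone does not determine the affine structure or its singular locus.
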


\begin{proof}

Consider the toric variety $(T,B)$ obtained via the sequence of toric blowups $T\to (\PP^1)^3_{x,y,t}$ given by subdividing the standard fan for $(\PP^1)^3$ along the rays $\RR_{\geq0}(0,k,1)$ and $\RR_{\geq0}(0,-k,-1)$ for $k=1,\ldots,n$. The effect of this blowup on the first orthant of $N_\RR$ is shown in Figure~\ref{fig!toric-model} and it resolves $Z\subset T$ into a curve that meets the toric boundary $B\subset T$ transversely. Consider the torus-invariant curves $C_{\pm}=B_{(1,0,0)}\cap B_{\pm(0,n,1)}\cong \PP^1$. When we blow up $Z\subset T$, this changes the intersection number $\left(C_+|_{B_{(0,n,1)}}\right)^2$ from $0$ to $-1$ (depicted by the circled number in Figure~\ref{fig!toric-model}), and similarly for $C_-$, but leaves all other intersection numbers of curves in the boundary the same.
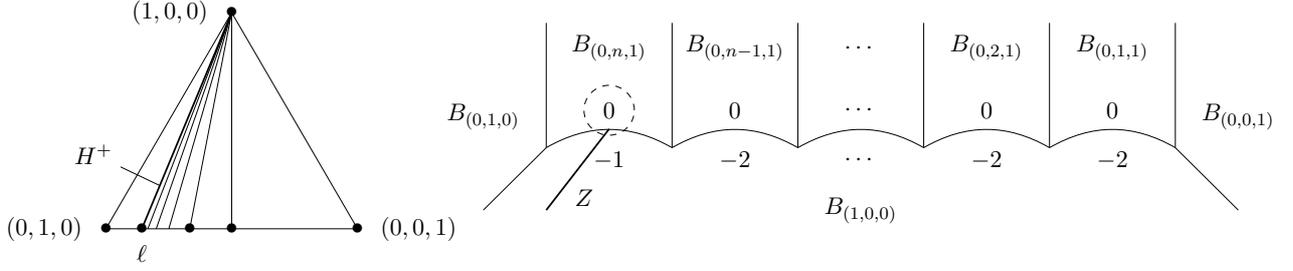
\begin{figure}[htbp]
\begin{center}
\resizebox{\textwidth}{!}{
\begin{tikzpicture}[scale=2]
    \draw (0,1) -- (0,0) -- (-0.5,-0.5);
    \draw (5,1) -- (5,0) -- (5.5,-0.5);
    \draw (1,1) -- (1,0) (2,1) -- (2,0) (3,1) -- (3,0) (4,1) -- (4,0);
    \draw (0,0) to [in=150,out=30] (1,0);
    \draw (1,0) to [in=150,out=30] (2,0);
    \draw (2,0) to [in=150,out=30] (3,0);
    \draw (3,0) to [in=150,out=30] (4,0);
    \draw (4,0) to [in=150,out=30] (5,0);
    \draw[thick] (0.5,0.15) -- (0,-0.5);
    \node at (0.5,0.3) {$0$};
    \node at (1.5,0.3) {$0$};
    \node at (2.5,0.3) {$\cdots$};
    \node at (3.5,0.3) {$0$};
    \node at (4.5,0.3) {$0$};
    \node at (0.5,-0.1) {$-1$};
    \node at (1.5,-0.1) {$-2$};
    \node at (2.5,-0.1) {$\cdots$};
    \node at (3.5,-0.1) {$-2$};
    \node at (4.5,-0.1) {$-2$};
    \node at (-0.5,0.25) {$B_{(0,1,0)}$};
    \node at (2.5,-0.5) {$B_{(1,0,0)}$};
    \node at (5.5,0.25) {$B_{(0,0,1)}$};
    \node at (0.5,0.8) {$B_{(0,n,1)}$};
    \node at (1.5,0.8) {$B_{(0,n-1,1)}$};
    \node at (2.5,0.8) {$\cdots$};
    \node at (3.5,0.8) {$B_{(0,2,1)}$};
    \node at (4.5,0.8) {$B_{(0,1,1)}$};
    \draw[dashed] (0.5,0.3) circle (0.2cm);
    \node at (0.3,-0.4) {$Z$};
    
    \begin{scope}[xshift = -3.5cm,yshift=-0.65cm]
    \draw (0,0) -- (2,0) -- (1,{sqrt(3)}) -- cycle;
    \draw (1,{sqrt(3)}) -- (1,0);
    \draw (1,{sqrt(3)}) -- (2/3,0);
    \draw (1,{sqrt(3)}) -- (1/2,0);
    \draw (1,{sqrt(3)}) -- (2/5,0);
    \draw (1,{sqrt(3)}) -- (2/6,0);
    \draw[thick] (1,{sqrt(3)}) -- (2/7,0);
    \node at (0,0) [label={left:$(0,1,0)$}]{$\bullet$};
    \node at (2/3,0) {$\bullet$};
    \node at (2/7,-0.2) {$\ell$};
    \node at (2/7,0) {$\bullet$};
    \node at (1,0) {$\bullet$};
    \node at (2,0) [label={right:$(0,0,1)$}]{$\bullet$};
    \node at (1,{sqrt(3)}) [label={left:$(1,0,0)$}]{$\bullet$};
    \draw (-0.1,0.6) -- (2/7 + 5*0.2/7, {0.2*sqrt(3)});
    \node[fill=white] at (-0.1,0.6) {$H^+$};
    \end{scope}
\end{tikzpicture}}
\caption{The fan of $(T,B)$ and the toric boundary of $(T,B)$.}
\label{fig!toric-model}
\end{center}
\end{figure}

In the fan of a 3-dimensional toric variety $W$, for any two smooth cones $\sigma_1=\langle v_1,v_2,v_3\rangle$ and $\sigma_2=\langle v_2,v_3,v_4\rangle$ which intersect in a face $\tau=\langle v_2,v_3\rangle$, the four primitive generators $v_1,v_2,v_3,v_4\in N$ satisfy a relation $v_4 = av_2+bv_3-v_1$, from which we read off that the torus invariant curve $C_\tau \subset W$ has normal bundle $\mathcal N_{C_\tau/W}=\sO(-a,-b)$. If $\pi\colon \widetilde W\to W$ is a nontoric blowup in the boundary of $W$ which changes the normal bundle of this curve $\widetilde C\subset \widetilde T$ to $\mathcal N_{\widetilde C_\tau/\widetilde W}=\sO(-c,-d)$, then we can obtain $\trop{\widetilde W}$ by redefining the integral affine structure on $N_\RR\setminus \left(\RR_{\geq0}v_2 \cup \RR_{\geq0}v_3\right)$. In particular, we require that tangent vectors are multiplied by a matrix $M\in\SL(3,\ZZ)$ as we pass through the wall $\tau$, from $\sigma_1$ into $\sigma_2$, so that we have $v_2=Mv_2$, $v_3=Mv_3$ and $v_4 = cv_2+dv_3-Mv_1$ (rather than the conventional rule above). In our case, we must redefine the affine structure as we pass through the two cones contained in the halfspace ${\overline H}^+\subset N_\RR$, and taking 
\[ v_1,\; v_2,\; v_3,\; v_4=(0,1,0),\; (1,0,0),\; (0,n,1),\; (0,n-1,1) \] 
and $c=d=1$ implies that $M=M_\ell$ is the matrix in the statement of the lemma. Apriori the singular locus of $\trop U$ is given by the ray $\rho=\RR_{\geq0}(1,0,0)$ and the line $\ell=\RR(0,n,1)$, but the integral affine structure extends over $\rho$.
\end{proof}

\paragraph{Moving the singular line of $\trop U$.}
As mentioned above in Remark~\ref{rmk!trop-not-unique}, changing the choice of toric model used in Propostion~\ref{prop!Utrop} will change the integral affine structure on $\trop U$ by moving the singular locus.

\begin{rmk}\label{rmk!moving-ell}
In particular, the singular line $\ell\subset \trop U$ can be moved to any line of the form $\ell = \RR(a,nb,b)$ for a pair of integers $a,b\in\ZZ_{>0}$. This is done by making the appropriate weighted blowup of the curves $C_\pm$ before making the nontoric blowup of $Z\subset T$. The difference now is that the `endpoints' of the curve $Z\subset B$ meet the boundary divisors $B_{\pm(a,nb,b)}$, rather than $B_{\pm(0,n,1)}$. Making this change in the integral affine structure on $\trop U$ does not change the underlying manifold $\trop U\cong \RR^3$, the set of integral points $\trop U(\ZZ) \cong \ZZ^3\subset \RR^3$ or the monodromy $M_\ell$ around $\ell$.

In fact, for reasons that will become clear after the discussion in the paragraph proceeding Figure~\ref{fig!bending} below, we will find it especially convenient to consider the case in which $a=nb$. Thus, from now on we consider the integral affine structure on $\trop U$ to be determined by fixing the choice of singular line $\ell = \RR(n,n,1)$.
\end{rmk}

\subsection{The cone $\sigma\subset U^{\text{trop}}$} \label{sect!cone}

\paragraph{Convexity in $\trop U$.}
We first explain what it means for a subset of $\trop U$ to be convex. 

\begin{defn}
A \emph{straight line segment} in $\trop U$ is a given by a map $\phi\colon [0,1]\to \trop U\setminus \ell$ which is linear when considered in the underlying affine structure on $\RR^3\setminus \overline H^+$ and whose tangent direction bends by $M_\ell^{
\pm1}$ whenever it passes through $H^+$ in the direction of $\pm(0,-1,n)$. A subset $S\subset \trop U$ is \emph{convex} with respect to the integral affine structure on $\trop U$ if any straight line segment that passes between any two points $p,q\in S$ is wholly contained in $S$. 
\end{defn}

This is a the most naive notion of convexity in $\trop U$ and unfortunately is not preserved under the kinds of changes in integral affine structure on $\trop U$ described in Remark~\ref{rmk!moving-ell}. Since we are now working with the fixed choice of singular line $\ell=\RR(n,n,1)$, unless otherwise stated convexity in $\trop U$ will always be assumed with respect to this integral affine structure.

There are more sophisticated versions of convexity which are compatible with changes to the choice of toric model, such as the \emph{broken line convexity} of Cheung--Magee--N\'ajera Ch\'avez \cite{cmnc}. However, in general this requires equipping $\trop U$ with a choice of scattering diagram so that one is able to talk about broken lines in $\trop U$. 

\paragraph{The cone $\sigma$ associated to $Y$.}
We now associate a cone $\sigma\subset \trop U$ to our partial compactification $(Y,D)$. The image of the three components of $D$ under the projection $\pi\colon Y\to \Aa^3_{x,y,t}$ are 
\[ \pi(D_1) = \VV(x,y), \quad \pi(D_2) = \VV(y) \quad \text{and} \quad \pi(D_3) = \VV(t). \]
Thus these three divisors correspond to the integral points $d_1=(1,1,0)$, $d_2=(0,1,0)$ and $d_3=(0,0,1)$ in $\trop U(\ZZ)$ respectively. We want to consider the convex hull of the three rays $\rho_i=\RR_{\geq0}d_i$ for $i=1,2,3$, but taken with respect to the integral affine structure on $U^{\text{trop}}$. 

\begin{prop} \label{prop!sigma}
Consider the cone $\sigma\subset \trop U\cong N_\RR$ which is defined in terms of the integral affine structure on $N_\RR$ as the closed convex cone cut out by the hyperplanes $(1,0,0)^\perp$, $(0,0,1)^\perp$, $(1,-2,0)^\perp$ and $(1,-1,-n)^\perp$. If the singular line of $\trop U$ is taken to be $\ell=\RR(a,nb,b)$ for some choice of $a,b\in \ZZ$ with $0<a<2b$, then $\sigma$ is the convex hull of the three rays $\rho_1$, $\rho_2$, $\rho_3$ when considered in the integral affine structure of $\trop U$. The cone $\sigma\subset U^{\text{trop}}$ has three faces, since the two faces of $\sigma$ (as a subset of $N_\RR$) cut out by $(1,-2,0)^\perp$ and $(1,-1,-n)^\perp$ form a single face which bends as it passes through $H^+$.
\end{prop}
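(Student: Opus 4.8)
The plan is to analyse $\sigma$ first as an ordinary polyhedral cone in $N_\RR$ and then to read off the effect of the bending along the wall $H^+$ by means of an explicit developing map.

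First I would compute the edges of $\sigma$ by intersecting the four defining hyperplanes in pairs. Three of the edges are the rays $\rho_1,\rho_2,\rho_3$ spanned by $d_1=(1,1,0)$, $d_2=(0,1,0)$, $d_3=(0,0,1)$; the fourth, cut out jointly by $(1,-2,0)$ and $(1,-1,-n)$, is $\rho_4=\RR(2n,n,1)$. Thus as a cone in $N_\RR$ the set $\sigma$ is $\langle\rho_1,\rho_2,\rho_3,\rho_4\rangle$, with facets $F_1=\{p_1=0\}\supset\langle\rho_2,\rho_3\rangle$, $F_2=\{p_3=0\}\supset\langle\rho_1,\rho_2\rangle$, $F_3=\{p_1=2p_2\}\supset\langle\rho_3,\rho_4\rangle$ and $F_4=\{p_1=p_2+np_3\}\supset\langle\rho_1,\rho_4\rangle$. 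Since $(0,-1,n)\cdot(2n,n,1)=0$ the edge $\rho_4$ lies on $H$, and because $0<a<2b$ forces $a/b<2\leq 2n$ it lies strictly inside the wall $H^+$; the same inequalities place $\ell=\RR(a,nb,b)$ in the interior of $\sigma$.

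Next I would introduce the developing map $\Phi$ for the bending across $H^+$, namely the map that is the identity on $\{p_2\geq np_3\}$ and equals $M_\ell^{-1}$ on $\{p_2\leq np_3\}$. This is continuous because $M_\ell$ fixes $H$ pointwise, as $(M_\ell-\id)p=(p_2-np_3,0,0)$, and by the defining bending rule it carries every $\trop U$-straight segment crossing $H^+$ to an honest straight segment. The heart of the matter is then a single identity: since $\Phi(\rho_3)=M_\ell^{-1}(0,0,1)=\RR(n,0,1)$ and $(2n,n,1)=n(1,1,0)+(n,0,1)$, the developed facets $\Phi(F_4)=\langle(1,1,0),(2n,n,1)\rangle$ and $\Phi(F_3)=\langle(2n,n,1),(n,0,1)\rangle$ are coplanar, their union being the single two-dimensional cone $\langle(1,1,0),(n,0,1)\rangle$ in which $\rho_4$ is an interior ray. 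Pulling back, $F_3\cup F_4$ is one flat face of $\sigma$ in $\trop U$ that bends convexly along $\rho_4\subset H^+$, which is exactly the final sentence of the proposition; meanwhile $F_1$ and $F_2$ meet $H$ only along $\RR(0,n,1)\subset H^-$ and in the origin respectively, so they stay genuinely flat and $\sigma$ acquires precisely three faces meeting along $\rho_1,\rho_2,\rho_3$.

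To identify $\sigma$ with the $\trop U$-convex hull of the three rays I would first prove $\sigma$ convex. A straight segment between two points of $\sigma$ either crosses $H^-$, and is then honestly straight and lies in $\sigma$ by ordinary convexity, or it crosses $H^+$, in which case its image under $\Phi$ is an ordinary segment inside $\Phi(\sigma)=\sigma_1\cup M_\ell^{-1}\sigma_2$, where $\sigma_1=\sigma\cap\{p_2\geq np_3\}=\langle\rho_1,\rho_2,(0,n,1),\rho_4\rangle$ and $\sigma_2=\sigma\cap\{p_2\leq np_3\}=\langle\rho_3,(0,n,1),\rho_4\rangle$. I would check that this developed cone is convex along its two gluing edges: along $\rho_4$ the pieces are coplanar, and along $(0,n,1)$ the containments $\sigma_1\subset\{p_1+p_2-np_3\geq0\}$ and $M_\ell^{-1}\sigma_2\subset\{p_1\geq0\}$ hold, which is precisely the condition for two cones to meet in a convex dihedral angle. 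Hence every $\trop U$-segment between points of $\sigma$ remains in $\sigma$, giving the inclusion hull $\subseteq\sigma$; for the reverse inclusion the bent $\rho_1$-$\rho_3$ segments sweep out the whole face $F_3\cup F_4$ (all crossing $H$ along $\rho_4$), and coning these off from $\rho_2$ recovers every point of $\sigma$.

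The main obstacle is the non-uniqueness of straight segments caused by the monodromy around $\ell$: between two points of $\sigma$ there may be both a flat segment crossing $H^-$ and a bent segment crossing $H^+$ — for instance a segment from $\rho_2$ to $\rho_3$ can run flat inside $F_1$ or bend through $\RR(n,n,1)$ — and convexity demands that every such branch lie in $\sigma$. The developing map tames the bent branches, but it is single-valued only after cutting along the closed wall $\overline{H}^+$, so one must track carefully which branch a given segment represents. It is exactly the hypothesis $0<a<2b$, placing $\ell$ and the crossing ray $\rho_4$ on the correct side of the wall, that guarantees each branch develops into the convex cone $\Phi(\sigma)$ and that the swept-out faces seal the cone, so that the convex hull is neither smaller nor larger than $\sigma$.
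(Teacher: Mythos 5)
Your argument is correct in substance but takes a genuinely different route from the paper's. The paper works pairwise: for points $v_i\in\rho_i$ it enumerates the at most two $\trop U$-straight segments between each pair, identifies the hyperplane(s) containing each segment, and observes that they all lie in faces of $\sigma$ --- in particular the bent $\rho_1$--$\rho_3$ segment lies first in $(1,-1,-n)^\perp$ and then in $(1,-2,0)^\perp$, which is how the single bent face appears --- before asserting that $\sigma$ is therefore the convex hull. You instead compute the full polyhedral structure of $\sigma$ in $N_\RR$ (exhibiting the fourth edge $\rho_4=\RR_{\geq0}(2n,n,1)\subset H^+$), build an explicit one-sheeted developing map $\Phi$, and reduce everything to ordinary convexity of the developed cone $\Phi(\sigma)$, with the identity $(2n,n,1)=n(1,1,0)+(n,0,1)$ doing the work of flattening the bent face. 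This is precisely the alternative the paper gestures at in its discussion of the developing space (``another way to show that $\sigma$ is a convex cone \ldots would be to show that $\delta^{-1}(\sigma\setminus\ell)$ is convex in every sheet of the developing map'') but does not carry out; your version buys a genuine verification of convexity of $\sigma$ for \emph{arbitrary} pairs of points, whereas the paper only checks segments between points of the three generating rays.

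Two soft spots to tighten. First, in the reverse inclusion, coning $F_3\cup F_4$ off from $\rho_2$ is not quite enough as stated: for $q\in F_3$ close to $\rho_4$ the ordinary segment from $\rho_2$ to $q$ meets $H$ at a point with $p_1/p_3$ close to $2n>a/b$, i.e.\ it crosses $H^+$, so it is not a $\trop U$-straight segment and those subcones are not swept by admissible segments. The easy repair is to note that the hull contains all four facets of $\sigma$ ($F_1$ and $F_2$ by unbent segments, $F_3\cup F_4$ by your bent sweep), and that every interior point of $\sigma$ lies on an unbent segment parallel to $H$ with both endpoints on $\partial\sigma$. Second, in the paper's original normalisation $\RR(0,n,1)$ is the common boundary of $H^+$ and $H^-$, so your claim that $F_1$ meets $H$ inside $H^-$ is only correct after $\ell$ has been moved to $\RR(a,nb,b)$ with $a>0$ and $H^{\pm}$ are re-read as the two components of $H\setminus\ell$; this is what the proposition intends, but it is worth saying explicitly since your map $\Phi$ is cut along $\overline H^-$ (not $\overline H^+$) and develops correctly only those segments crossing the half-plane on the $\rho_4$ side of $\ell$.
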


\begin{proof}
In a general integral affine manifold with singularities there can be more that one straight line segment between two given points, and in our case there are at most two; a straight line in $\trop U\setminus {\overline H}^+$, or a line that bends by $M_\ell^{\pm1}$ as it passes through $H^+$. (Note that once a line passes through $H$, either through $H^-$ or $H^+$, it does not pass through it again.) The assumption on the location of the singular line $\ell=\RR(a,nb,b)$ means is that $\ell$ intersects the interior of the cone $\sigma$, and avoids being contained in one of the faces of $\sigma$ (which happens if $a=0$ or $a=2n$).

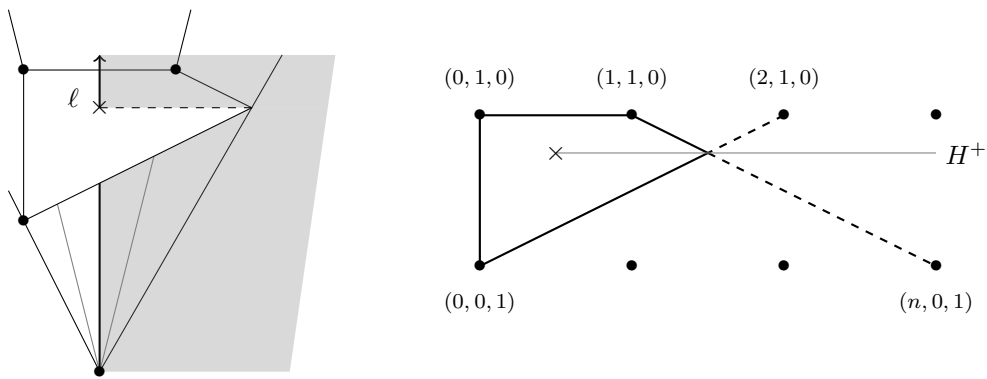
\begin{figure}[htbp]
\begin{center}
\begin{tikzpicture}[scale=2,font=\small]
    \begin{scope}[xshift=3cm, yshift = -0.3cm]  
   \draw[thick, dashed] (2,1) -- (6/4,3/4) (6/4,3/4) -- (3,0);
   \draw[thick] (6/4,3/4) -- (0,0) -- (0,1) -- (1,1) -- (6/4,3/4);
   
   \draw[gray] (1/2,3/4) -- (3,3/4);
   
   \node at (0,0) [label={below:\scriptsize $(0,0,1)$}] {$\bullet$};
   \node at (1,0) {$\bullet$};
   \node at (2,0) {$\bullet$};
   \node at (3,0) [label={below:\scriptsize $(n,0,1)$}] {$\bullet$};
   \node at (0,1) [label={above:\scriptsize $(0,1,0)$}]{$\bullet$};
   \node at (1,1) [label={above:\scriptsize $(1,1,0)$}]{$\bullet$};
   \node at (2,1) [label={above:\scriptsize $(2,1,0)$}]{$\bullet$};
   \node at (3,1) {$\bullet$};
   \node at (1/2,3/4) {$\times$};
   \node at (3.2,3/4) {$H^+$};
   
    \end{scope}
   
    \draw (1/2,-1) -- (1/2-1.2*1/2,-1+1.2);   
    \draw[gray] (1/2,-1) -- (0,1);
    \draw[gray] (1/2,-1) -- (1,1);
    \draw (0,1) -- (1/2-1.2*1/2,-1+2.4);
    \draw (1,1) -- (1/2+1.2*1/2,-1+2.4);
    \draw (1/2,-1) -- (1/2+1.2,-1+1.2*7/4);
    \draw[thick] (1/2,-1) -- (1/2,3/4);
    \fill[gray,opacity=0.3] (1/2,-1) -- (1/2,3/4) -- (3/4+5/4,3/4) -- (1/2+5/4,-1) -- cycle;
    \draw[fill=white] (0,0) -- (0,1) -- (1,1) -- (6/4, 3/4) -- (0,0); 
    \draw[thick,->] (1/2,3/4) -- (1/2,-1+1.2*7/4);
    \fill[gray,opacity=0.3] (1/2,3/4) -- (1/2,-1+1.2*7/4) -- (1/2+1.2*1/4+5/4,-1+1.2*7/4) -- (3/4+5/4,3/4) -- cycle;
    \node[above] at (1/2,3/4) [label={left:$\ell$}] {};
    \node at (0,0) {$\bullet$};
    \node at (0,1) {$\bullet$};
    \node at (1,1) {$\bullet$};
    \node at (1/2,-1) {$\bullet$};
    \node at (1/2, 3/4) {$\times$};
    \draw[dashed] (1/2, 3/4) -- (6/4,3/4);
\end{tikzpicture}
\caption{The cone $\sigma\subset \trop U$ and the slice of $\sigma$ by the hyperplane $\langle {\cdot}, (0,1,1)\rangle =1$.}
\label{fig!convex-cone}
\end{center}
\end{figure}

Now, consider any three points $v_1\in\rho_1$, $v_2\in \rho_2$ and $v_3\in \rho_3$. Since $v_1$ and $v_2$ belong to the same side of $H$ there is only one straight line between them which must lie in the hyperplane $\langle v_1,v_2\rangle = (0,0,1)^\perp$. Since $v_2$ and $v_3$ lie on opposite sides of $H$ we have at most two straight lines between them (cf.\ Figure~\ref{fig!bending}). We certainly have the usual straight line from $v_2$ to $v_3$ which is contained in the hyperplane $\langle v_2,v_3\rangle = (1,0,0)^\perp$ and must pass through $H^-$ (and hence won't bend) by the decision to choose $a>0$. We possibly have a second line that starts off at $v_2$ heading towards $M_\ell^{-1}v_3$ and then bends back towards $v_3$ if it hits $H^+$ (which happens only if $a<nb$). The first segment of this second line is contained in the hyperplane $\langle v_2,M_\ell^{-1}v_3 \rangle = (n,0,-1)^\perp$ and the second segment is contained in $\langle M_\ell v_2,v_3 \rangle = (1,-1,-n)^\perp$. Similarly there are at most two straight lines from $v_1$ to $v_3$. We possibly have the usual straight line from $v_1$ to $v_3$ contained in the hyperplane $\langle v_1,v_3\rangle = (1,-1,0)^\perp$, since if $a>nb$ then this line will hit the halfplane $H^-$ and carry on without bending. However we always have line that starts of at $v_1$ travelling towards $M_\ell^{-1}v_3$, hits $H^+$ (by virtue of our decision to choose $a<2nb$) and bends back towards $v_3$. The two segments of this line are contained in the hyperplanes $\langle v_1,M_\ell^{-1}v_3 \rangle = (1,-1,-n)^\perp$ and $\langle M_\ell v_1,v_3 \rangle = (1,-2,0)^\perp$. Viewed in the integral affine structure of $\trop U$, these two hyperplanes cut out a single face of $\sigma$. Altogether we find that the cone $\sigma$ contains any line between any two points of $\rho_1,\rho_2,\rho_3$ and thus must be the convex hull in the integral affine structure of $\trop U$.
\end{proof}

\subsection{The developing space of $\sigma$}

An alternative way to view the cone $\sigma$ is inside the \emph{developing space} of $\trop U$. This is the integral affine manifold $\mathcal D$ obtained as the universal covering space $\delta\colon \mathcal D\to \trop U\setminus \ell$. Since $\trop U\setminus \ell$ is homeomorphic to $\RR\times(\RR^2\setminus 0)$, the developing space $\mathcal D$ is an infinite sheeted cover of $\trop U\setminus \ell$. This is depicted in Figure~\ref{fig!developing}, where travelling anticlockwise around $\ell$ we wind from the blue sheet onto the black sheet, and then finally onto the red sheet. 
\begin{figure}[htbp]
\begin{center}
\begin{tikzpicture}[scale=2]
   \draw (13/4,3/4) -- (13/4,-1/4) -- (-3/2,-1/4) -- (-3/2,3/2) -- (7/2,3/2) -- (7/2,3/4);
   \draw[blue] (7/2,3/4) -- (7/2,-1/2) -- (-5/4,-1/2) -- (-5/4,5/4) -- (3/4,3/4);
   \draw[red] (13/4,3/4) -- (13/4,7/4) -- (11/4,7/4) -- (3/4,3/4);

   \draw[thick,red] (6/4,3/4) -- (2,1) -- (5/4,1);
   \draw[thick] (0,0) -- (6/4,3/4) -- (1,1) -- (0,1) -- (0,0);
   \draw[thick,blue] (6/4,3/4) -- (3,0) -- (-1,1) -- (-1/4,1);
   
   \draw[dashed] (3/4,3/4) -- (4,3/4);
   
   \node at (-1,0) {$\bullet$};
   \node at (0,0) [label={[fill=white]below:\scriptsize $(0,0,1)$}] {$\bullet$};
   \node at (1,0) {$\bullet$};
   \node at (2,0) {$\bullet$};
   \node at (3,0) [label={[fill=white]below:\scriptsize $(n,0,1)$}] {$\bullet$};
   \node at (-1,1) [label={[fill=white]above:\scriptsize $(-1,1,0)$}]{$\bullet$};
   \node at (0,1) [label={[fill=white]above:\scriptsize $(0,1,0)$}]{$\bullet$};
   \node at (1,1) [label={[fill=white]above:\scriptsize $(1,1,0)$}]{$\bullet$};
   \node at (2,1) [label={[fill=white]above:\scriptsize $(2,1,0)$}]{$\bullet$};
   \node at (3,1) {$\bullet$};
   \node at (3/4,3/4) {$\times$};
   
   \fill[black, opacity=0.2] (0,0) -- (6/4,3/4) -- (1,1) -- (0,1) -- (0,0);
   \fill[blue, opacity=0.2] (6/4,3/4) -- (3,0) -- (-1,1) -- (-1/4,1) -- (3/4,3/4) -- cycle;
   \fill[red, opacity=0.2] (6/4,3/4) -- (2,1) -- (5/4,1) -- (3/4,3/4) -- cycle;
\end{tikzpicture}
\caption{The developing space $\mathcal D$ for the cone $\sigma\subset U^{\text{trop}}$.}
\label{fig!developing}
\end{center}
\end{figure}
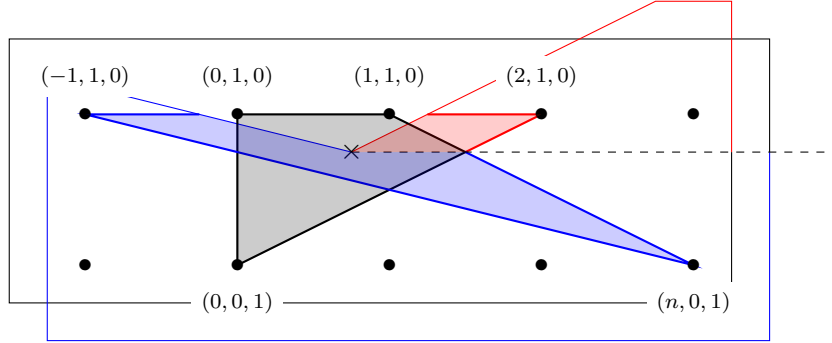
Every time we move onto a new sheet the integral points of $\trop U$ are multiplied by the monodromy matrix $M_\ell$. To carry out calculations using the integral affine structure of $\trop U$ we must stick to a consistent sheet of $\delta$. For example, another way to show that $\sigma$ is a convex cone with respect to the affine structure on $\trop U$ would be to show that $\delta^{-1}(\sigma\setminus \ell)$ is convex in every sheet of the developing map.


\subsection{A combinatorial model of the $(-2,0)$-flop}

\paragraph{Two possible subdivisions of $\sigma$.}
As we saw in the proof of Proposition~\ref{prop!sigma}, by moving the line of singularities $\ell=\RR(a,nb,b)$ (which corresponds to changing the choice of toric model for $U$ by Remark~\ref{rmk!moving-ell}) the cone $\sigma$ contains a second (interior) line between any two points of $\rho_1$ and $\rho_3$ if $a>nb$. Thus in this case $\sigma$ contains a plane which is linear with respect to the integral affine structure on $\trop U$ and which we can use to subdivide $\sigma$. Similarly, if $a<nb$ we can subdivide $\sigma$ by inserting a plane between $\rho_2$ and $\rho_3$. Thus in the limit $a=nb$ the cone $\sigma$ can be subdivided in exactly two different ways, as in Figure~\ref{fig!bending}. This is the motivation for the choice $\ell=\RR(n,n,1)$ made in Remark~\ref{rmk!moving-ell}.

\begin{figure}[htbp]
\begin{center}
\begin{tikzpicture}[scale = 1.5]
    \begin{scope}
    \node at (0,0) [label={below:$\rho_3$}] {$\bullet$};
    \node at (0,1) [label={above:$\rho_2$}] {$\bullet$};
    \node at (1,1) [label={above:$\rho_1$}] {$\bullet$};
    \draw (0,0) -- (0,1) -- (1,1) -- (6/4, 3/4) -- (0,0);  
    \node at (3/4-0.3, 3/4) {$\times$};  
    \draw[dashed] (3/4-0.3, 3/4) -- (2,3/4);
    \draw (0,0) -- (3/4,3/4) -- (0,1);
    \node at (1/2,-0.7) {$a<nb$};
    \end{scope}
    \begin{scope}[xshift = 3cm]
    \node at (0,0) [label={below:$\rho_3$}] {$\bullet$};
    \node at (0,1) [label={above:$\rho_2$}] {$\bullet$};
    \node at (1,1) [label={above:$\rho_1$}] {$\bullet$};
    \draw (0,0) -- (0,1) -- (1,1) -- (6/4, 3/4) -- (0,0);  
    \node at (3/4, 3/4) {$\times$};  
    \draw[dashed] (3/4, 3/4) -- (2,3/4);
    \draw (0,0) -- (1,1) (3/4,3/4) -- (0,1);
    \node at (1/2,-0.7) {$a=nb$};
    \end{scope}
    \begin{scope}[xshift = 6cm]
    \node at (0,0) [label={below:$\rho_3$}] {$\bullet$};
    \node at (0,1) [label={above:$\rho_2$}] {$\bullet$};
    \node at (1,1) [label={above:$\rho_1$}] {$\bullet$};
    \draw (0,0) -- (0,1) -- (1,1) -- (6/4, 3/4) -- (0,0);  
    \node at (3/4+0.3, 3/4) {$\times$};  
    \draw[dashed] (3/4+0.3, 3/4) -- (2,3/4);
    \draw (0,0) -- (1,1);
    \node at (1/2,-0.7) {$a>nb$};
    \end{scope}
\end{tikzpicture}
\caption{The two different subdivisions of $\sigma$.}
\label{fig!bending}
\end{center}
\end{figure}
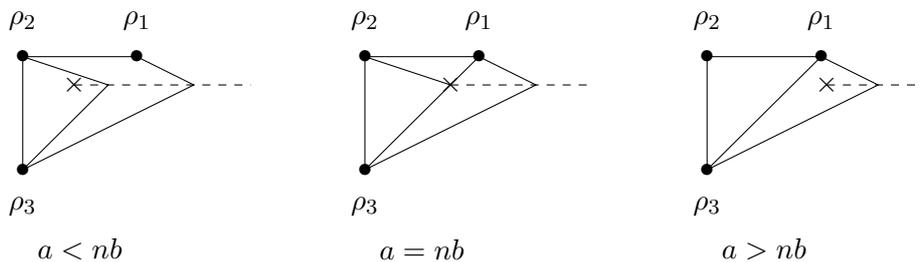

We take the two subdivisions of Figure~\ref{fig!bending} to be the combinatorial model of our flop. The analogue of Figure~\ref{fig!atiyah-flop} becomes Figure~\ref{fig!20flop}.

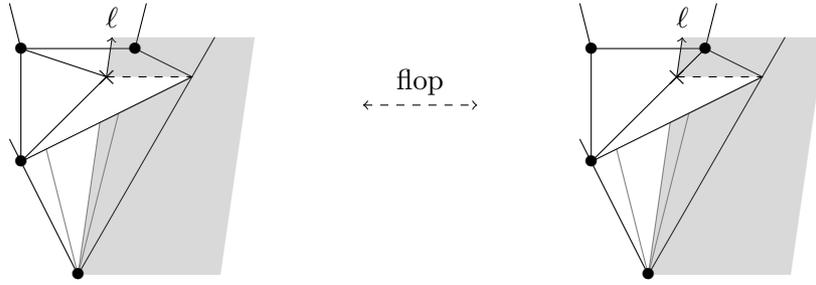
\begin{figure}[htbp]
\begin{center}
\begin{tikzpicture}[scale = 1.5]
 
    \draw (1/2,-1) -- (1/2-1.2*1/2,-1+1.2);   
    \draw[gray] (1/2,-1) -- (0,1);
    \draw[gray] (1/2,-1) -- (1,1);
    \draw (0,1) -- (1/2-1.2*1/2,-1+2.4);
    \draw (1,1) -- (1/2+1.2*1/2,-1+2.4);
    \draw (1/2,-1) -- (1/2+1.2,-1+1.2*7/4);
    \draw[gray] (1/2,-1) -- (3/4,3/4);
    \fill[gray,opacity=0.3] (1/2,-1) -- (3/4,3/4) -- (3/4+5/4,3/4) -- (1/2+5/4,-1) -- cycle;
    \draw[fill=white] (0,0) -- (0,1) -- (1,1) -- (6/4, 3/4) -- (0,0); 
    \draw[->] (3/4,3/4) -- (1/2+1.2*1/4,-1+1.2*7/4);
    \fill[gray,opacity=0.3] (3/4,3/4) -- (1/2+1.2*1/4,-1+1.2*7/4) -- (1/2+1.2*1/4+5/4,-1+1.2*7/4) -- (3/4+5/4,3/4) -- cycle;
    \node[above] at (1/2+1.2*1/4,-1+1.2*7/4) {$\ell$};
    \node at (0,0) {$\bullet$};
    \node at (0,1) {$\bullet$};
    \node at (1,1) {$\bullet$};
    \node at (1/2,-1) {$\bullet$};
    \node at (3/4, 3/4) {$\times$};
    \draw[dashed] (3/4, 3/4) -- (6/4,3/4);
    \draw (0,0) -- (3/4,3/4) -- (0,1);
    
    \draw[<->,dashed] (3,0.5) to node[midway,above] {flop} (4,0.5);
    
    \begin{scope}[xshift = 5cm]    
    \draw (1/2,-1) -- (1/2-1.2*1/2,-1+1.2);   
    \draw[gray] (1/2,-1) -- (0,1);
    \draw[gray] (1/2,-1) -- (1,1);
    \draw (0,1) -- (1/2-1.2*1/2,-1+2.4);
    \draw (1,1) -- (1/2+1.2*1/2,-1+2.4);
    \draw (1/2,-1) -- (1/2+1.2,-1+1.2*7/4);
    \draw[gray] (1/2,-1) -- (3/4,3/4);
    \fill[gray,opacity=0.3] (1/2,-1) -- (3/4,3/4) -- (3/4+5/4,3/4) -- (1/2+5/4,-1) -- cycle;
    \draw[fill=white] (0,0) -- (0,1) -- (1,1) -- (6/4, 3/4) -- (0,0); 
    \draw[->] (3/4,3/4) -- (1/2+1.2*1/4,-1+1.2*7/4);
    \fill[gray,opacity=0.3] (3/4,3/4) -- (1/2+1.2*1/4,-1+1.2*7/4) -- (1/2+1.2*1/4+5/4,-1+1.2*7/4) -- (3/4+5/4,3/4) -- cycle;
    \node[above] at (1/2+1.2*1/4,-1+1.2*7/4) {$\ell$};
    \node at (0,0) {$\bullet$};
    \node at (0,1) {$\bullet$};
    \node at (1,1) {$\bullet$};
    \node at (1/2,-1) {$\bullet$};
    \node at (3/4, 3/4) {$\times$};
    \draw[dashed] (3/4, 3/4) -- (6/4,3/4);
    \draw (0,0) -- (1,1);
    \end{scope}
    
\end{tikzpicture}
\caption{The combinatorial picture of a $(-2,0)$-flop.}
\label{fig!20flop}
\end{center}
\end{figure}

\begin{thm}\label{thm!20-flop}
Let $f\colon (X,\widetilde D)\to(Y,D)$ be the flopping extraction
\[ f\colon X = \VV\left( y\alpha - z\beta, \: x\alpha - (y+t^n)\beta \right)\subset \Aa^4\times \PP^1_{\alpha:\beta} \to Y, \] 
over $P\in Y$, where $\widetilde D=\sum_{i=1}^3\widetilde D_i\subset X$ is the strict transform of $D\subset Y$. Then the intersection complex of $(X,\widetilde D)$ is dual to the lefthand subdivision of $\sigma$ appearing in Figure~\ref{fig!20flop}, where the three rays of $\sigma$ correspond to the three boundary divisors $\widetilde D_1$, $\widetilde D_2$, $\widetilde D_3$ and the subdividing plane corresponds to the exceptional curve $C=f^{-1}(P)$. Moreover the righthand subdivision of $\sigma$ is dual to the intersection complex of the other flopping extraction from $P\in Y$.
\end{thm}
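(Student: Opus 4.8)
The plan is to compute the intersection complex of $(X,\widetilde D)$ directly from the two affine charts exhibited in the proof of Lemma~\ref{lem!resolution}, to read off its rays and walls as divisorial valuations in $\trop U(\ZZ)=N$, and to match the result against the lefthand subdivision of Figure~\ref{fig!20flop}. First I would identify the strict transforms in each chart: in $\{\alpha=1\}\cong\Aa^3_{\beta,z,t}$ they are the coordinate hyperplanes $\widetilde D_1=\VV(\beta)$, $\widetilde D_2=\VV(z)$, $\widetilde D_3=\VV(t)$, while in $\{\beta=1\}\cong\Aa^3_{\alpha,x,t}$ they are $\widetilde D_2=\VV(x\alpha-t^n)$ and $\widetilde D_3=\VV(t)$, with $\widetilde D_1$ no longer visible. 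Computing the valuation $(\ord(x),\ord(y),\ord(t))$ along each of $\widetilde D_1,\widetilde D_2,\widetilde D_3$ recovers the rays $d_1=(1,1,0)$, $d_2=(0,1,0)$, $d_3=(0,0,1)$, establishing the correspondence between the three rays of $\sigma$ and the three boundary divisors.

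The heart of the argument is to develop the smooth toric chart $\{\alpha=1\}$ into $\trop U$. Because the contraction $\pi\colon X\to T$ is nontoric, the map sending a monomial valuation $a\cdot\ord_\beta+b\cdot\ord_z+c\cdot\ord_t$ to its image in $N$ is only piecewise linear: using $x=(z\beta+t^n)\beta$ and $y=z\beta$ one computes
\[ (a,b,c)\longmapsto\bigl(a+\min(a+b,\,nc),\;a+b,\;c\bigr), \]
with a bend along the wall $\{a+b=nc\}$. I would then read off the three $2$-dimensional faces of the image cone $\tau_1$. The face $\{c=0\}$, corresponding to the curve $\widetilde D_1\cap\widetilde D_2$, is the straight face $F_{12}$ of $\sigma$; the face $\{b=0\}$, corresponding to $\widetilde D_1\cap\widetilde D_3$, bends at the ray $(2n,n,1)$ and is exactly the bent boundary face $F_{13}$; and the face $\{a=0\}$, corresponding to the exceptional curve $C=\widetilde D_2\cap\widetilde D_3$ (the only component of $\widetilde D_2\cap\widetilde D_3$ visible in this chart), bends precisely at the singular ray $\ell=\RR(n,n,1)$. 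This last face is interior to $\sigma$, which simultaneously proves that $C$ corresponds to the subdividing plane and that the other two faces are genuine boundary faces of $\sigma$.

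It then remains to treat the second chart and assemble the subdivision. Since $\{\beta=1\}$ carries an $A_{n-1}$ singularity along $\widetilde D_2$ it is not snc, so I would either resolve it by the toric blowups of \S\ref{sect!pagoda} or simply set $\tau_2=\overline{\sigma\setminus\tau_1}$, finding that its dual cone is the remaining `lune' bounded by the subdividing plane $P_C$ and the straight face $F_{23}$, the latter being the strict transform of the curve $D_2\cap D_3$ (the second component of $\widetilde D_2\cap\widetilde D_3$). Gluing $\tau_1$ and $\tau_2$ along $P_C$ reproduces the lefthand subdivision. For the final sentence, the biregular involution $\iota\colon Y\to Y$ exchanging $x\leftrightarrow z$ fixes $D_3$ and swaps $D_1\leftrightarrow D_2$; it therefore interchanges the two flopping extractions and acts on $\trop U$ by exchanging $\rho_1\leftrightarrow\rho_2$, carrying the lefthand subdivision to the righthand one. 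Equivalently, one reruns the chart computation for the extraction blowing up $D_2$, whose exceptional curve now lies in $\widetilde D_1\cap\widetilde D_3$.

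The step I expect to be the main obstacle is controlling the non-toric chart $\{\beta=1\}$ and verifying that its $A_{n-1}$ structure is compatible with the monodromy $M_\ell$ coming from the other chart --- that is, checking that the two developed cones $\tau_1$ and $\tau_2$ glue to a genuinely well-defined subdivision of $\sigma$ in the integral affine structure on $\trop U$, and not merely to a set-theoretic decomposition.
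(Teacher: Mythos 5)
Your proposal is correct and follows essentially the same route as the paper: the published proof is precisely the chart computation you begin with (identifying $\widetilde D_1=\VV(\beta)$, $\widetilde D_2=\VV(z)$, $\widetilde D_3=\VV(t)$ in the patch $\{\alpha=1\}$ and $\widetilde D_1=\emptyset$, $\widetilde D_2=\VV(x\alpha-t^n)$, $\widetilde D_3=\VV(t)$ in $\{\beta=1\}$, noting the $A_{n-1}$ point $Q$ where $\widetilde D_2$ and $\widetilde D_3$ fail to meet transversally, and reading off the dual intersection complex from Figure~\ref{fig!intersection-cx}), with the other extraction dispatched ``by a similar calculation.'' Your explicit development $(a,b,c)\mapsto\bigl(a+\min(a+b,nc),\,a+b,\,c\bigr)$ of the smooth chart into $\trop U$ --- checking that the bend lies along $H$, that the face $\{a=0\}$ lands on the interior wall through $\ell$ while the other two faces land on the boundary faces of $\sigma$ --- is a correct and worthwhile strengthening that the paper leaves implicit in its figures, and your symmetry argument via $x\leftrightarrow z$ is also sound; the only slip is in your last sentence, where the \emph{other} flopping extraction is the blowup of $D_1$, since the given $f$ (with $[\alpha:\beta]=[z:y]$) is already the blowup of $D_2=\VV(y,z)$.
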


\begin{proof}
As described in the proof of Lemma~\ref{lem!resolution}, the small resolution is covered by two affine charts $X = \Aa^3_{\beta,z,t}\cup \Aa^3_{\alpha,x,t}$ and the three boundary divisors are given by $\widetilde D_1 =\VV(\beta)$, $\widetilde D_2 =\VV(z)$, $\widetilde D_3 =\VV(t)$ in the first chart and $\widetilde D_1 =\emptyset$, $\widetilde D_2 =\VV(\alpha x - t^n)$, $\widetilde D_3 =\VV(t)$ in the second. The exceptional curve is $C=\widetilde D_2\cap \widetilde D_3$ and the three boundary divisors intersect transversely everywhere, except for the point $Q:=(0,0,0)\in \Aa^3_{\alpha,x,t}$ where $\widetilde D_2$ and $\widetilde D_3$ look like an $A_{n-1}$ singularity and a plane meeting along their toric boundary strata. Thus the intersection complex of $\widetilde D$ is given by the rightmost diagram in Figure~\ref{fig!intersection-cx}. The verification for the other side of the flop is given by a similar calculation.
\end{proof}

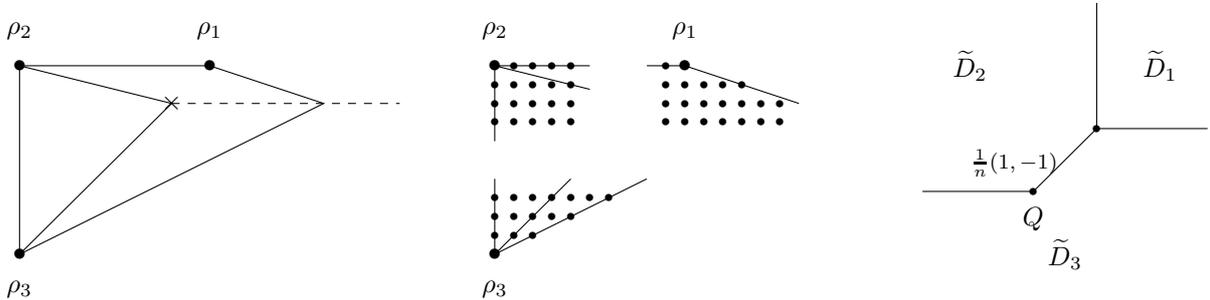
\begin{figure}[htbp]
\begin{center}
\begin{center}\begin{tikzpicture}[scale=2.5,font=\small]

  \begin{scope}[xshift = 0cm]
    \node at (0.0,1.0) {\tiny $\bullet$};
    \node at (0.1,1.0) {\tiny $\bullet$};
    \node at (0.2,1.0) {\tiny $\bullet$};
    \node at (0.3,1.0) {\tiny $\bullet$};
    \node at (0.4,1.0) {\tiny $\bullet$};
    \node at (0.0,0.9) {\tiny $\bullet$};
    \node at (0.1,0.9) {\tiny $\bullet$};
    \node at (0.2,0.9) {\tiny $\bullet$};
    \node at (0.3,0.9) {\tiny $\bullet$};
    \node at (0.4,0.9) {\tiny $\bullet$};
    \node at (0.0,0.8) {\tiny $\bullet$};
    \node at (0.1,0.8) {\tiny $\bullet$};
    \node at (0.2,0.8) {\tiny $\bullet$};
    \node at (0.3,0.8) {\tiny $\bullet$};
    \node at (0.4,0.8) {\tiny $\bullet$};
    \node at (0.0,0.7) {\tiny $\bullet$};
    \node at (0.1,0.7) {\tiny $\bullet$};
    \node at (0.2,0.7) {\tiny $\bullet$};
    \node at (0.3,0.7) {\tiny $\bullet$};
    \node at (0.4,0.7) {\tiny $\bullet$};

    \node at (1.0,1) {\tiny $\bullet$};
    \node at (0.9,1) {\tiny $\bullet$};
    \node at (0.9,0.9) {\tiny $\bullet$};
    \node at (1.0,0.9) {\tiny $\bullet$};
    \node at (1.1,0.9) {\tiny $\bullet$};
    \node at (1.2,0.9) {\tiny $\bullet$};
    \node at (1.3,0.9) {\tiny $\bullet$};
    \node at (0.9,0.8) {\tiny $\bullet$};
    \node at (1.0,0.8) {\tiny $\bullet$};
    \node at (1.1,0.8) {\tiny $\bullet$};
    \node at (1.2,0.8) {\tiny $\bullet$};
    \node at (1.3,0.8) {\tiny $\bullet$};
    \node at (1.4,0.8) {\tiny $\bullet$};
    \node at (0.9,0.7) {\tiny $\bullet$};
    \node at (1.0,0.7) {\tiny $\bullet$};
    \node at (1.1,0.7) {\tiny $\bullet$};
    \node at (1.2,0.7) {\tiny $\bullet$};
    \node at (1.3,0.7) {\tiny $\bullet$};
    \node at (1.4,0.7) {\tiny $\bullet$};
    \node at (1.5,0.8) {\tiny $\bullet$};
    \node at (1.5,0.7) {\tiny $\bullet$};

    \node at (0.0,0.0) {\tiny $\bullet$};
    \node at (0.0,0.1) {\tiny $\bullet$};
    \node at (0.0,0.2) {\tiny $\bullet$};
    \node at (0.1,0.1) {\tiny $\bullet$};
    \node at (0.1,0.2) {\tiny $\bullet$};
    \node at (0.2,0.1) {\tiny $\bullet$};
    \node at (0.2,0.2) {\tiny $\bullet$};
    \node at (0.3,0.2) {\tiny $\bullet$};
    \node at (0.4,0.2) {\tiny $\bullet$};
    \node at (0.0,0.3) {\tiny $\bullet$};
    \node at (0.1,0.3) {\tiny $\bullet$};
    \node at (0.2,0.3) {\tiny $\bullet$};
    \node at (0.3,0.3) {\tiny $\bullet$};
    \node at (0.4,0.3) {\tiny $\bullet$};
    \node at (0.5,0.3) {\tiny $\bullet$};
    \node at (0.6,0.3) {\tiny $\bullet$};
    
    \node at (0,0) [label={below:$\rho_3$}] {$\bullet$};
    \node at (0,1) [label={above:$\rho_2$}] {$\bullet$};
    \node at (1,1) [label={above:$\rho_1$}] {$\bullet$};
    \draw (0,0.4) -- (0,0) -- (0.4,0.4) (0,0) -- (0.8,0.4);
    \draw (0,0.6) -- (0,1) -- (0.5,0.875) (0,1) -- (0.5,1);
    \draw (0.8,1) -- (1,1) -- (1.6,0.8); 
  \end{scope}
  \begin{scope}[xshift = -2.5cm]
    \node at (0,0) [label={below:$\rho_3$}] {$\bullet$};
    \node at (0,1) [label={above:$\rho_2$}] {$\bullet$};
    \node at (1,1) [label={above:$\rho_1$}] {$\bullet$};
    \node at (4/5,4/5) {$\times$};
    \draw (0,0) -- (0,1) -- (1,1) -- (8/5, 4/5) -- (0,0); 
    \draw[dashed] (4/5, 4/5) -- (2,4/5);
    \draw (0,0) -- (4/5,4/5) -- (0,1);
  \end{scope}
    
  \begin{scope}[xshift = 2.5cm]
    \draw (-1/4,1/3) -- (1/3,1/3) -- (2/3,2/3) -- (2/3,4/3)  (2/3,2/3) -- (5/4,2/3);
    \node at (1/3-0.1,1/3+0.15) {\scriptsize $\tfrac1n(1,-1)$};
    \node at (1/3,1/3) [label={[label distance=-3pt]below:$Q$}]{\tiny $\bullet$};
    \node at (2/3,2/3) {\tiny $\bullet$};
    
    \node at (0,1) {$\widetilde D_2$};
    \node at (1,1) {$\widetilde D_1$};
    \node at (0.5,0) {$\widetilde D_3$};
  \end{scope}    
\end{tikzpicture}\end{center}
\caption{The intersection complex of the boundary divisor on one side of the flop.}
\label{fig!intersection-cx}
\end{center}
\end{figure}

Moreover, the manner of the intersection between boundary components can also be read off from subdivision of $\sigma$ by looking at the local fan structure around each of the rays $\rho_1,\rho_2,\rho_3$. For example, restricting to one sheet of the developing map, in a neighbourhood of the ray $\rho_2=\RR_{\geq0}(0,1,0)$ we see a two dimensional fan with rays $(0,-1)$, $(n,-1)$ and $(1,0)$ (as seen in the top left corner of the middle diagram of Figure~\ref{fig!intersection-cx}) corresponding to the three curves of $\widetilde D_2\cap (\widetilde D_1\cup\widetilde D_3)$. The fact that $\widetilde D_2$ has an $A_{n-1}$ singularity at $Q\in X$ is seen from the fact that $\langle (0,1), (n,1) \rangle$ is the cone of an $A_{n-1}$-singularity.

\section{Reid's pagoda} \label{sect!pagoda}

\subsection{The pagoda} \label{sect!pagoda-description}
\emph{Reid's pagoda} is a geometric construction introduced by Reid \cite[\S5]{reid} to describe the geometry of a $(-2,0)$-flop. In particular, Reid shows that if $(C\subset X)$ is a $(-2,0)$-flopping curve of width $n\geq2$ then blowup of $C$ gives a morphism $\pi_1\colon (F_1\subset X_1)\to (C\subset X)$ with exceptional divisor a Hirzebruch surface $F_1\cong \mathbb F_2$. Moreover, if $C_1$ is the negative curve of $F_1$ then $C_1 \subset X_1$ is a $(-2,0)$-flopping curve of width $n-1$ \cite[Theorem 5.4]{reid}. By induction we have a sequence of blowups
\[ (F_n\subset X_n) \stackrel{\pi_n}{\longrightarrow} (C_{n-1}\subset F_{n-1}\subset X_{n-1}) \longrightarrow \cdots \longrightarrow (C_1\subset F_1\subset X_1) \stackrel{\pi_1}{\longrightarrow} (C\subset X) \]
in which each curve $C_k\subset X_k$ is a $(-2,0)$-curve, except for the last $C_{n-1}\subset X_{n-1}$ which is a $(-1,-1)$-curve, and each exceptional divisor is $F_k\cong \mathbb F_2$, except for the last which is $F_n\cong\PP^1\times\PP^1$. 

As an abuse of notation let $F_k\subset X_n$ denote the strict transform of $F_k$ in $X_n$ for $k=1,\ldots,n$ and let $C_k\subset X_n$ denote the negative curve of $\mathbb F_2\cong F_k\subset X_n$ for $k=1,\ldots,n-1$. The \emph{pagoda} is the union of divisors $\bigcup_{k=1}^nF_k$ in $X_n$. These divisors form a tower in which each divisor $F_k$ intersects the divisor $F_{k+1}$ above it along the curve $C_k=F_k\cap F_{k+1}$, in such a way that $(C_k|_{F_k})^2=-2$ and $(C_k|_{F_{k+1}})^2=2$ (cf.\ Figure~\ref{fig!pagoda}). In particular for the last divisor $C_{n-1}\subset F_n$ is a curve of bidegree $(1,1)$.

Now there are two possible contractions $X_n\to X$ or $X_n\to X'$ which contract the pagoda back down to a line, by starting with one of the two possible contractions for the divisor $F_n\subset X_n$ (being, as it is, the exceptional divisor over a $(-1,-1)$-flopping curve). These two possible contractions give a resolution $X\leftarrow X_n\rightarrow X'$ of the flop $\phi_n\colon X\dashrightarrow X'$ in terms of smooth blowups and blowdowns.

\subsection{Obtaining Reid's pagoda by subdividing $\sigma$}

We will now see how to obtain Reid's pagoda by subdividing $\sigma\subset\trop U$ into a union of smooth cones, in exactly the same manner that one would resolve a toric singularity by toric blowups. 

\paragraph{A height function on $\sigma$.}
We consider the height function $h\colon \trop U\to \RR$ given by $h(v)=b+c$ for $v=(a,b,c)$. Note that since the monodromy around $\ell$ acts by $M_\ell v = (a+b-nc,b,c)$ we have $h(v) = h(M_\ell v)$ and thus this is a well-defined linear function on $\trop U$. Moreover $h$ takes integral values on $\trop U(\ZZ)$, $h(v)\geq0$ for all $v\in\sigma$ and the slices $\sigma^{h=\lambda} = \{ v\in \sigma : h(v) = \lambda \}$ for $\lambda\geq0$ are all compact.  

\paragraph{The subdivision.}
We will proceed by induction on $n\geq1$, so we let $Y_n=Y$ and $\sigma_n=\sigma$. The height 1 slice $\sigma_n^{h=1}$ contains three integral points. These are the points $d_1=(1,1,0)$, $d_2=(0,1,0)$ and $d_3=(0,0,1)$ generating the rays of $\sigma_n$, and they correspond to the three boundary divisors $D_1,D_2,D_3\subset Y_n$. Looking at the height 2 slice $\sigma_n^{h=2}$ we see seven points of $\trop U(\ZZ)$, as shown in the righthand side of Figure~\ref{fig!slice}. In particular there is an interior integral point $f_1=(1,1,1)\in\sigma_n^{h=2}\cap \trop U(\ZZ)$. 
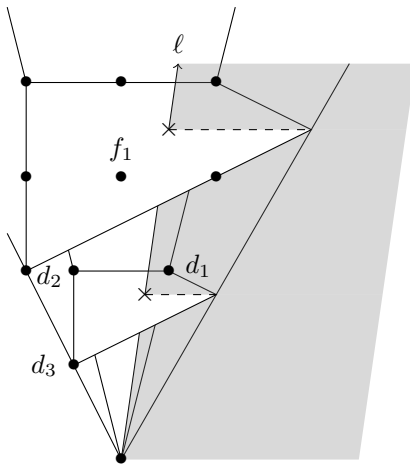
\begin{figure}[htbp]
\begin{center}
\begin{tikzpicture}[scale = 2.5,font=\small]
    
    \draw (1/2,-1) -- (1/2-1.2*1/2,-1+1.2);   
    \draw (1/2,-1) -- (1/2-1.2*1/2,-1+2.4);
    \draw (1/2,-1) -- (1/2+1.2*1/2,-1+2.4);
    \draw (1/2,-1) -- (1/2+1.2,-1+1.2*7/4);
    \draw (1/2,-1) -- (5/8,-1/8);
    \fill[gray,opacity=0.3] (1/2,-1) -- (5/8,-1/8) -- (5/8+5/4,-1/8) -- (1/2+5/4,-1) -- cycle;
    \draw[fill=white] (1/4,-1/2) -- (1/4,0) -- (3/4,0) -- (1,-1/8) -- cycle;
    \draw (3/4,3/4) -- (5/8,-1/8);
    \fill[gray,opacity=0.3] (3/4,3/4) -- (5/8,-1/8) -- (5/8+5/4,-1/8) -- (3/4+5/4,3/4) -- cycle;
    \draw[fill=white] (0,0) -- (0,1) -- (1,1) -- (6/4, 3/4) -- (0,0); 
    \draw[->] (3/4,3/4) -- (1/2+1.2*1/4,-1+1.2*7/4);
    \fill[gray,opacity=0.3] (3/4,3/4) -- (1/2+1.2*1/4,-1+1.2*7/4) -- (1/2+1.2*1/4+5/4,-1+1.2*7/4) -- (3/4+5/4,3/4) -- cycle;

    \node[above] at (1/2+1.2*1/4,-1+1.2*7/4) {$\ell$};
    
    \node at (0,0) {$\bullet$};
    \node at (0,1/2) {$\bullet$};
    \node at (0,1) {$\bullet$};
    \node at (1/2,1/2) [label={[label distance=-4pt]90: $f_1$}]  {$\bullet$};
    \node at (1/2,1) {$\bullet$};
    \node at (1,1/2) {$\bullet$};
    \node at (1,1) {$\bullet$};
    
    \node at (1/4,-1/2) [label={[label distance=-4pt]180: $d_3$}] {$\bullet$};
    \node at (1/4,0) [label={[yshift=-2pt,label distance=-6pt]180: $d_2$}] {$\bullet$};
    \node at (3/4,0) [label={[yshift=2pt,label distance=-4pt]0: $d_1$}] {$\bullet$};
    
    \node at (1/2,-1) {$\bullet$};
    \node at (3/4, 3/4) {$\times$};
    \node at (5/8, -1/8) {$\times$};  
    \draw[dashed] (3/4, 3/4) -- (6/4,3/4) (5/8,-1/8) -- (1,-1/8);
    
\end{tikzpicture}
\caption{The integral point $f_1\in \trop U(\ZZ)$ in the height 2 slice $\sigma^{h=2}$.}
\label{fig!slice}
\end{center}
\end{figure}

\begin{lem}\label{lem!pagoda}
The refinement of the cone $\sigma_n$ along the ray $\RR_{\geq0}f_1$ subdivides $\sigma_n$ into two smooth cones and a cone isomorphic to $\sigma_{n-1}$.
\end{lem}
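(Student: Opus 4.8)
The plan is to check directly that the interior ray $\RR_{\geq0}f_1$ cuts $\sigma_n$ into the three cones
\[ \tau_{12}=\langle d_1,d_2,f_1\rangle,\qquad \tau_{23}=\langle d_2,d_3,f_1\rangle,\qquad \tau_{31}=\langle d_3,d_1,f_1\rangle, \]
one over each face of $\sigma_n$, and then to show that the first two are smooth while the third is isomorphic to $\sigma_{n-1}$. The organising observation is the relation $f_1=(1,1,1)=d_1+d_3$, so that inserting $\RR_{\geq0}f_1$ is the integral-affine analogue of a toric blowup of the wall $\langle d_1,d_3\rangle$, which is exactly the bent face of $\sigma_n$. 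I would first record that $f_1$ is the interior lattice point of the height-$2$ slice, so that these three cones genuinely subdivide $\sigma_n$.

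For the two smooth cones the argument is routine. I would first check that the interiors of $\tau_{12}$ and $\tau_{23}$ do not meet the branch cut $H^+$: solving $\langle{\cdot},(0,-1,n)\rangle=0$ on each cone forces the relevant parameters to satisfy an inequality that keeps the cone off the half $x>nz$ of the wall (in fact $\tau_{12}$ meets $\{y=nz\}$ only in $x\le nz$, touching $\ell=\partial H^+$ along a single face). Hence on each of $\tau_{12},\tau_{23}$ the integral affine structure of $\trop U$ agrees with the standard structure on $N_\RR$, and it suffices to compute
\[ \det(d_1,d_2,f_1)=\pm1\qquad\text{and}\qquad \det(d_2,d_3,f_1)=\pm1, \]
so that $\{d_1,d_2,f_1\}$ and $\{d_2,d_3,f_1\}$ are each $\ZZ$-bases of $N$ and both cones are smooth.

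The substance of the lemma is the identification $\tau_{31}\cong\sigma_{n-1}$, and here I expect the real work. The difficulty is that $\tau_{31}$ is degenerate in the standard structure: its generators satisfy $f_1=d_1+d_3$ and therefore span only the plane $\{x=y\}$, so that $\tau_{31}$ is a genuine three-dimensional cone only through the bending of $\trop U$ along $\ell$ (the branch cut $H^+$ sits in its interior). I would therefore lift $\tau_{31}$ to the developing space $\delta\colon\mathcal D\to\trop U\setminus\ell$ and unfold it across $H^+$, keeping the flat face $\langle d_3,f_1\rangle$ fixed and transporting the far generator $d_1$ through the wall by its $M_\ell^{\pm1}$-image; this presents $\tau_{31}$ as an honest unimodular cone in one chart (e.g. $\langle d_3,f_1,M_\ell d_1\rangle$ has determinant $\pm1$) together with the datum recording how its two ends are reglued. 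The claim is then that this unfolded cone-with-gluing is integral-affinely isomorphic to the corresponding presentation of $\sigma_{n-1}$, via a piecewise-linear map given by an integral unimodular matrix on each sheet that carries $\ell$ to the singular line of the width-$(n-1)$ model.

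The main obstacle is the bookkeeping of the monodromy through this unfolding. The ambient monodromy of $\trop U$ around $\ell$ is the width-$n$ matrix $M_\ell$ (a shear by $n+1$ on the transverse slice), whereas $\sigma_{n-1}$ carries width-$(n-1)$ monodromy (shear by $n$), so these germs cannot be reconciled by any naive full-loop invariant. The resolution I would exploit is that $\ell$ lies on the \emph{boundary} of $\tau_{31}$ — it sits in the relative interior of the face $\langle d_1,f_1\rangle$ shared with the smooth cone $\tau_{12}$ — so that a full loop around $\ell$ does not fit inside $\tau_{31}$; consequently the ambient monodromy is not an invariant of the germ $\tau_{31}$, and what must instead be matched is the opening sector of $\tau_{31}$ along $\ell$ together with its affine structure. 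Verifying that this local datum agrees with that of $\sigma_{n-1}$ — equivalently, that inserting $f_1=d_1+d_3$ drops the effective width by exactly one — is the crux, and it is the combinatorial shadow of Reid's inductive blowup $\pi_1\colon(F_1\subset X_1)\to(C\subset X)$, under which the width falls from $n$ to $n-1$. Once $\tau_{31}\cong\sigma_{n-1}$ is established, iterating the subdivision recovers the full pagoda, as needed for Theorem~\ref{thm!pagoda}.
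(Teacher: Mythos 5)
There is a genuine error at the heart of your decomposition: you have swapped the roles of $\langle d_1,d_2,f_1\rangle$ and $\langle d_1,d_3,f_1\rangle$. The face of the subdivision joining $\RR_{\geq0}d_1$ to $\RR_{\geq0}f_1$ cannot be the flat wedge in the plane $\{x=y\}$: for $n\geq 2$ every ordinary straight segment from a point $\lambda d_1$ to a point $\mu f_1$ lies in $\{x=y\}$ and meets $\ell=\RR(n,n,1)$ (the function $x-nz$ equals $\lambda>0$ at one end and $(1-n)\mu<0$ at the other), so the only straight line segments in $\trop U$ between these two rays are the ones that cross $H^+$ and bend, at the ray $\RR_{\geq0}(2n-1,n,1)$. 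Consequently the convex hull $\langle d_1,d_2,f_1\rangle$ bulges past $\{x=y\}$ and contains $\ell$ in its \emph{interior}; it is this cone, not $\langle d_1,d_3,f_1\rangle$, that is isomorphic to $\sigma_{n-1}$, via $P=\left(\begin{smallmatrix}1&0&-1\\0&1&-1\\0&0&1\end{smallmatrix}\right)$, which carries $d_1,d_2,f_1$ to the generators of $\sigma_{n-1}$, carries $\ell$ to $\RR(n-1,n-1,1)$ and conjugates $M_\ell$ to $M_{n-1}$. Your determinant computation $\det(d_1,d_2,f_1)=\pm1$ is correct but proves nothing here, because this cone is not the standard simplicial cone on those generators. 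Dually, the degenerate-looking cone $\langle d_1,d_3,f_1\rangle$ is one of the two \emph{smooth} pieces: it does not contain $\ell$ at all (for $n\geq2$), and unfolding it across $H^+$ replaces $d_1$ by $M_\ell d_1=(2,1,0)$ and yields the unimodular cone $\langle(2,1,0),(0,0,1),(1,1,1)\rangle$, with no residual gluing datum left over.

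The paragraph in which you argue that the ambient monodromy need not be matched because $\ell$ sits only on the boundary of your $\tau_{31}$ is the symptom of this misidentification rather than a repair of it: $\sigma_{n-1}$ contains its singular line in its interior and carries the full monodromy $M_{n-1}$ around it, so no cone meeting $\ell$ only along a boundary face can be isomorphic to it. Relatedly, the identification with $\sigma_{n-1}$ --- which you rightly flag as the substance of the lemma --- is left in your write-up as an unresolved ``crux,'' whereas it is closed by the explicit matrix $P$ above. Your treatment of $\langle d_2,d_3,f_1\rangle$ (no intersection with $H^+$, unimodular, hence smooth) is correct and agrees with the paper.
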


\begin{proof}
The refinement along $\RR_{\geq0}f_1$ divides $\sigma_n$ into three cones $\sigma_n=\tau_1\cup\tau_2\cup\tau_3$ where $\tau_1=\langle f_1,d_2,d_3 \rangle$, $\tau_2=\langle d_1,f_1,d_3 \rangle$ and $\tau_3=\langle d_1,d_2,f_1 \rangle$. Note that, due to the convenient choice of location for $\ell\subset \trop U$, all three of $\tau_1$, $\tau_2$ and $\tau_3$ are convex cones with respect to the integral affine structure on $\trop U$. Indeed, $\tau_1$ is convex since any line from $d_2$ to a point on the face $\langle f_1,d_3\rangle$ must be a straight line in $N_\RR$, since a line that bends along $H^+$ would have to pass through $\ell$. Similarly $\tau_2$ and $\tau_3$ are also convex cones.
\begin{center}\begin{tikzpicture}[scale = 2]
    \node at (0,0) [label={below:$d_3$}] {$\bullet$};
    \node at (0,1) [label={above:$d_2$}] {$\bullet$};
    \node at (1/2,1/2) [label={left:$f_1$}]{$\bullet$};
    \node at (1,1) [label={above:$d_1$}] {$\bullet$};
    \node[red] at (2,1) [label={above:\textcolor{red}{$d_1$}}] {$\bullet$};
    \node[blue] at (3/2, 1/2) [label={below:\textcolor{blue}{$f_1$}}] {$\bullet$};
    \node[blue] at (3, 0) [label={below:\textcolor{blue}{$d_3$}}] {$\bullet$};
    \draw (0,0) -- (0,1) -- (1,1) -- (6/4, 3/4) -- (0,0);  
    \node at (3/4, 3/4) {$\times$};  
    \draw[dashed] (3/4, 3/4) -- (2,3/4);
    \draw (0,0) -- (1/2,1/2) -- (0,1) (1/2,1/2) -- (1/2 + 3/4, 1/2 + 1/4) -- (1,1);
    \draw[red] (1/2 + 3/4, 1/2 + 1/4) -- (2,1) -- (6/4,3/4);
    \draw[blue] (6/4,3/4) -- (3,0) (1/2 + 3/4, 1/2 + 1/4) -- (3/2, 1/2);
\end{tikzpicture}\end{center}
By looking at the appropriate sheet of the developing map for $\sigma_n$ we see that
\[ \tau_1 \cong \langle (0,1,0), (0,0,1), (1,1,1) \rangle, \qquad \tau_2 \cong \langle (2,1,0), (0,0,1), (1,1,1) \rangle \]
and these are easily both seen to be smooth. To see that the last cone $\tau_3$ is isomorphic to $\sigma_{n-1}$ consider the matrix
\[ P = \begin{pmatrix}
1 & 0 & -1 \\
0 & 1 & -1 \\
0 & 0 & 1 
\end{pmatrix} \in \SL(3,\ZZ). \]
This maps $d_1,d_2,f_1$ to the points $(1,1,0),(0,1,0),(0,0,1)$ and the singular ray $\ell$ to $\RR_{\geq0}(n-1,n-1,1)$. The monodromy around $\ell$ in this new configuration is $PM_nP^{-1}=M_{n-1}$. 

Finally, if $n=1$ then $\ell$ coincides with the ray $\RR_{\geq0}f_1$ and this refinement is a subdivision of $\sigma_1$ into three smooth cones.
\end{proof}

\paragraph{The pagoda.}
We now define the sequence of integral points
\[ \{ f_i = (i,i,1) \in\sigma_n^{h=i+1} \cap \trop U(\ZZ) : i = 1,\ldots,n \}, \]
where we note that the last point $f_n$ is a primitive generator for the singular ray $\ell$. By subdividing $\sigma_n$ along the ray $\RR_{\geq0}f_i$ for $i=1,\ldots,n$ inductively we arrive at Figure~\ref{fig!pagoda}, where the subdivision of $\sigma_n$, displayed on the lefthand side is dual to the resolution of $(P\in Y)$ displayed on the righthand side
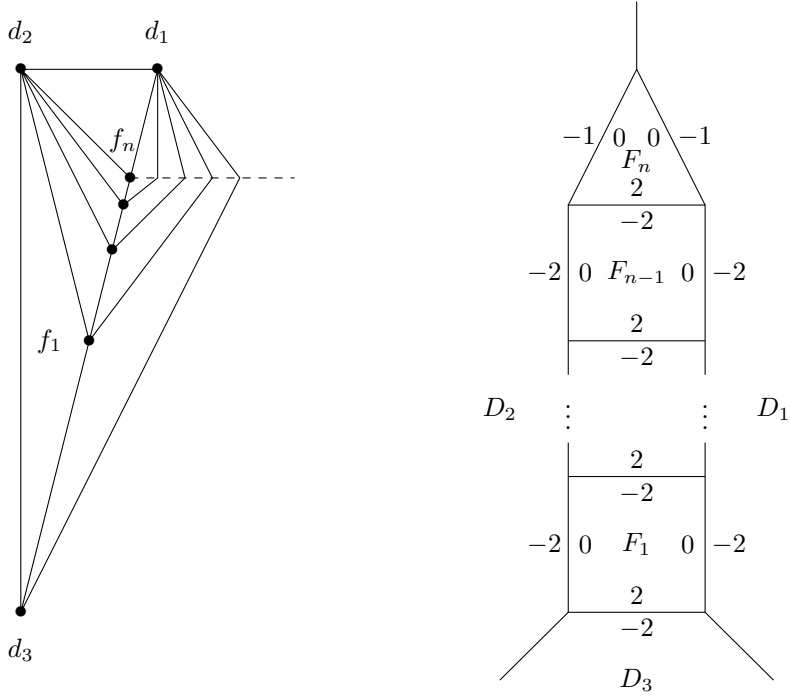
\begin{figure}[htbp]
\begin{center}
\begin{center}\begin{tikzpicture}[scale=0.9,font=\small]

  \begin{scope}[xshift = -8cm, yscale = 8, xscale = 2]
    \node at (0,0) [label={below:$d_3$}] {$\bullet$};
    \node at (0,1) [label={above:$d_2$}] {$\bullet$};
    \node at (1/2,1/2) [label={left:$f_1$}] {$\bullet$};
    \node at (2/3,2/3) {$\bullet$};
    \node at (3/4,3/4) {$\bullet$};
    \node at (1,1) [label={above:$d_1$}] {$\bullet$};
    \draw (0,0) -- (0,1) -- (1,1) -- (8/5, 4/5) -- (0,0);  
    \node at (4/5, 4/5) [label={[xshift=-0.1cm]above:$f_n$}] {$\bullet$};  
    \draw[dashed] (4/5, 4/5) -- (2,4/5);
    \draw (0,0) -- (1,1) (1/2,1/2) -- (0,1) -- (2/3,2/3) (3/4,3/4) -- (0,1) -- (4/5,4/5);
    \draw (1/2,1/2) -- (7/5,4/5) -- (1,1);
    \draw (2/3,2/3) -- (6/5,4/5) -- (1,1);
    \draw (3/4,3/4) -- (5/5,4/5) -- (1,1);
    
  \end{scope}
  
  \draw (0,2.5) -- (0,0) -- (2,0) -- (2,2.5);
  \draw (2,3.5) -- (2,6) -- (0,6) -- (0,3.5);
  \draw (0,2) -- (2,2) (0,4) -- (2,4) (0,6) -- (1,8) -- (2,6);
  \draw (1,8) -- (1,9) (0,0) -- (-1,-1) (2,0) -- (3,-1);
  \node at (1,1) {$F_1$};
  \node at (1,5) {$F_{n-1}$};
  \node at (1,6.65) {$F_{n}$};
  \node at (-1,3) {$D_2$};
  \node at (3,3) {$D_1$};
  \node at (1,-1) {$D_3$};
  \node at (1,0.25) {$2$};
  \node at (1,1.75) {$-2$};
  \node at (1,2.25) {$2$};
  \node at (1,3.75) {$-2$};
  \node at (1,4.25) {$2$};
  \node at (1,5.75) {$-2$};
  \node at (1,6.25) {$2$};
  \node at (0.25,1) {$0$};
  \node at (0,3) {$\vdots$};
  \node at (0.25,5) {$0$};
  \node at (0.75,7) {$0$};
  \node at (1.75,1) {$0$};
  \node at (2,3) {$\vdots$};
  \node at (1.75,5) {$0$};
  \node at (1.25,7) {$0$};

  \node at (1,-0.25) {$-2$};
  \node at (2.35,1) {$-2$};
  \node at (2.35,5) {$-2$};
  \node at (1.85,7) {$-1$};
  \node at (-0.35,1) {$-2$};
  \node at (-0.35,5) {$-2$};
  \node at (0.15,7) {$-1$};
  
\end{tikzpicture}\end{center}
\caption{The subdivision of $\sigma$ corresponding to Reid's pagoda.}
\label{fig!pagoda}
\end{center}
\end{figure}

\begin{thm}\label{thm!pagoda}
The resolution of $\sigma_n$ into smooth cones obtained by this sequence of subdivisions is dual to the intersection complex of the boundary in Reid's pagoda, where the integral point $f_i\in\trop U(\ZZ)$ corresponds to the $i$th Hirzebruch surface $F_i\cong \mathbb F_2$ for $i=1,\ldots,n-1$ and the last point $f_n$ corresponds to the final exceptional divisor $F_n\cong \PP^1\times\PP^1$. By restricting to consistent sheets of the developing map $\delta$, the self-intersection numbers of the curves in the pagoda can be calculated in the same way that they are in toric geometry. 
\end{thm}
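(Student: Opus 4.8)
The plan is to establish the two assertions of the theorem separately: first, that the combinatorial type of the fully subdivided cone agrees with the intersection complex of the boundary $\bigcup_i F_i\cup\widetilde D$ in the pagoda (including the identification $f_i\leftrightarrow F_i$), and second, that the self-intersection numbers recorded in Figure~\ref{fig!pagoda} are reproduced by the toric recipe applied in the developing space $\mathcal D$. For the first assertion I would induct on $n$, using Lemma~\ref{lem!pagoda} as the inductive step: subdividing $\sigma_n$ along $\RR_{\geq0}f_1$ yields two smooth cones together with a cone $\tau_3\cong\sigma_{n-1}$, which mirrors the first blowup $\pi_1$ of Reid's tower, extracting a divisor $F_1\cong\mathbb{F}_2$ and leaving a $(-2,0)$-curve $C_1\subset X_1$ of width $n-1$ \cite{reid}.

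Concretely, I would let the new ray $\RR_{\geq0}f_1$ correspond to $F_1$ and identify $\tau_3$ with $\sigma_{n-1}$ via the matrix $P$ of Lemma~\ref{lem!pagoda}, which conjugates $M_n$ to $M_{n-1}$ and so realises $\tau_3$ as the tropicalisation of the base of the width-$(n-1)$ flop determined by $C_1$. By the inductive hypothesis the subdivision of $\tau_3$ along $f_2,\dots,f_n$ is dual to the pagoda over $C_1$, with $f_i\leftrightarrow F_i$ for $i\ge 2$ and $f_n\leftrightarrow F_n\cong\PP^1\times\PP^1$; the base case $n=1$ is the Atiyah flop, where $\RR_{\geq0}f_1=\ell$ already corresponds to the exceptional $\PP^1\times\PP^1$. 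Reading off the two-dimensional cones of the resulting fan then records the adjacencies $F_i\cap F_{i+1}=C_i$, $\widetilde D_3\cap F_1$, and $F_i\cap\widetilde D_1$, $F_i\cap\widetilde D_2$, matching the intersection complex drawn in Figure~\ref{fig!pagoda}.

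For the self-intersection numbers I would argue exactly as in toric geometry but on a consistent sheet of $\delta$. Given a wall $\tau=\langle v_2,v_3\rangle$ between smooth cones $\langle v_1,v_2,v_3\rangle$ and $\langle v_2,v_3,v_4\rangle$, I lift all four generators to one sheet — replacing any generator lying across the singular locus by its image under $M_\ell$ — and read off the normal bundle $\sO(-a,-b)$ of the corresponding curve from $v_1+v_4=av_2+bv_3$. For $1\le i\le n-1$ the star of $f_i$ has the four neighbours $f_{i-1},d_2,f_{i+1},d_1$; developing the two cones on the $d_1$-side (so that $d_1=(1,1,0)$ is replaced by $M_\ell d_1=(2,1,0)$) turns the flat-degenerate star into the complete smooth fan on the rays $(-1,-1),(0,1),(1,1),(2,1)$, which is the fan of $\mathbb{F}_2$. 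Its curves come out with the expected intersection numbers: $F_i\cap\widetilde D_1$ and $F_i\cap\widetilde D_2$ have self-intersection $0$, while $C_{i-1}=F_{i-1}\cap F_i$ and $C_i=F_i\cap F_{i+1}$ are the $(+2)$- and $(-2)$-sections. The same development of the three cones around the singular ray $\ell=\RR_{\ge0}f_n$ produces the four rays of $\PP^1\times\PP^1$, and computing the wall $\langle f_{n-1},f_n\rangle$ via $M_\ell d_1=(2,1,0)$ gives $(C_{n-1}|_{F_{n-1}})^2=-2$ and $(C_{n-1}|_{F_n})^2=+2$, so that $C_{n-1}$ is recovered as the bidegree-$(1,1)$ curve of Reid.

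I expect the monodromy bookkeeping to be the main obstacle. Since every $f_j$, together with $d_1$ and $d_3$, lies on the plane $\{a=b\}$ through the singular line $\ell$, each cone on the $d_1$-side of the subdivision is degenerate in the flat structure on $N_\RR$ and becomes full-dimensional only after developing across $\ell$; a naive application of the toric relation produces contradictions such as $1=2$. The delicate step is thus to fix, for each curve, a consistent sheet of $\delta$ on which both adjacent cones are non-degenerate, and to verify that the resulting intersection number is independent of that choice — which is exactly the assertion of the final sentence of the theorem. The divisor $F_n\cong\PP^1\times\PP^1$ is where this matters most: here the three subdivision cones around $\ell$ must be unwound by $M_\ell$ into the four-cone fan, and the intersection curve $C_{n-1}$ appears not as a torus-invariant boundary curve but as the non-toric $(1,1)$-curve, so that the monodromy is genuinely responsible for the value $+2$.
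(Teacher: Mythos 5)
Your proposal is correct and follows essentially the same route as the paper, whose own proof is much terser: it asserts the duality with the intersection complex with a ``one can check'' (your induction on Lemma~\ref{lem!pagoda}, matching each subdivision at $f_1$ with the blowup $\pi_1$ extracting $F_1$ and identifying $\tau_3\cong\sigma_{n-1}$ via $P$, is exactly the intended verification), and for the intersection numbers it performs precisely your wall computation $d_2+M_\ell d_1=2f_n-2f_{n-1}$ on a consistent sheet of $\delta$. The only slip is the claim that developing the three cones around $\ell=\RR_{\geq0}f_n$ ``produces the four rays of $\PP^1\times\PP^1$'': the link of $f_n$ has only three rays (developing to part of an $\mathbb F_2$-type fan), and $F_n\cong\PP^1\times\PP^1$ is not realised torically here because one boundary component is the non-toric $(1,1)$-curve $C_{n-1}$ --- a point you in fact acknowledge correctly in your final paragraph, and which the paper records in the remark following the theorem.
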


\begin{proof}
One can check that after the inductive sequence of blowups used to construct the pagoda in \S\ref{sect!pagoda-description}, the strict transform of the boundary divisors $D_1,D_2,D_3$ intersect the exceptional divisors $F_1,\ldots,F_n$ as shown in the righthand side of Figure~\ref{fig!pagoda}. Thus this subdivision of $\sigma_n$ is dual to the intersection complex of the pagoda. 

To see that the intersection numbers of the curves in the pagoda can now been seen to agree with the standard toric computation. For example to show that the normal bundle of $C_{n-1} = F_n\cap F_{n-1}$ is $\mathcal O(2,-2)$ consider the equation
\[ d_2 + M_\ell d_1 = (0,1,0) + (2,1,0) = (2,2,0) = 2(n,n,1) - 2(n-1,n-1,1) = 2f_n - 2f_{n-1}, \]
where we use $M_\ell d_1$, rather than $d_1$, to stay on a consistent sheet of $\delta$. If the pagoda was a toric variety this computation would tell us that $(C_{n-1}|_{F_{n-1}})^2 = -2$ and $(C_{n-1}|_{F_n})^2 = 2$, as expected.
\end{proof}

\begin{rmk}
Note that the final exceptional divisor corresponding to $f_n$ is the only component of the pagoda which doesn't look like a toric surface, since it appears with a non-toric boundary divisor. This accounts for the singularity in the integral affine structure around the ray $\ell\subset \sigma$. 
\end{rmk}

\section{Constructing the dual cone} \label{sect!dual-side}


We now explain how to construct a cone $\sigma^\star\subset \trop V$ in an integral affine manifold with singularities which is dual to $\sigma\subset \trop U$.  As described in \S\ref{sect!generalising}, this could be done by tropicalising the Fock--Goncharov dual cluster variety $V$ with a similar calculation to that of Proposition~\ref{prop!Utrop}. However we don't do this and instead choose to give a construction of $\trop V$ which is more intrinsically related to $U$, although we retain the notation $\trop V$.

In fact any integral affine manifold with singularities $B$ (with the additional data of a decomposition of $B$ into polyhedral cones and a polarisation) can be dualised by a general construction called the \emph{discrete Legendre transform} (see e.g.\ \cite{rudd}). The dual integral affine manifold with singularities $B^\star$ is homeomorphic to $B$ and has the same singular locus $\Delta\subset B^\star$. The smooth locus $B\setminus \Delta$ has a local system $\Lambda$ of tangent vectors, and a dual local system $\Lambda^\star$ of cotangent vectors. On $B^\star\setminus \Delta$ the local systems $\Lambda$ and $\Lambda^\star$ are exchanged. The monodromy $M_\gamma$ generated by a small loop $\gamma$ around a codimension 2 component of $\Delta$ is replaced by the inverse transpose $M_\gamma^\star=(M_\gamma^{-1})^t$.

\subsection{The dual integral affine manifold $\trop V$}

Recall that the integral points $\trop V(\ZZ) \subset \trop V$ are supposed to correspond to an additive basis of the coordinate ring $\CC[U]$. Since $U$ has coordinate ring
\[ \CC[U] = \CC\left[x,y^{\pm1},z,t^{\pm1}\right] / \left(xz - y(y+t^n)\right) \]
the following two sets of monomials 
\[ S_1 := \{ x^ay^bt^c : (a,b,c)\in \ZZ_{\geq0}\times\ZZ^2 \} \quad \text{and} \quad S_2 :=  \{ z^ay^bt^c : (a,b,c)\in \ZZ_{\geq0}\times\ZZ^2 \} \] 
form an additive basis for $\CC[U]$, after we account for the fact that we have double-counted all monomials of the form $S_1\cap S_2 = \{ y^bt^c: (b,c)\in \ZZ^2 \}$. 

\begin{defn}
We will call the set $S_1\cup S_2$ the \emph{theta basis} of $\CC[U]$, and refer to the monomials that appear in this set as \emph{theta functions} on $U$.
\end{defn}

Since the monomials of the set $S_1$ are all linearly independent in $\CC[U]$, any divisorial valuation $\nu\colon \CC(U)^\times\to\ZZ$, corresponding to a point $\nu\in \trop U(\ZZ)$, can be evaluated as a linear function $\nu(x^ay^bt^c) = a\nu(x) + b\nu(y) + c\nu(t)$ on $S_1$. Thus we consider $S_1=V_1(\ZZ)$ as the integral points of the integral affine manifold with boundary $V_1:=\RR_{\geq0}\times\RR^2$. Similarly we consider $S_2 = V_2(\ZZ)$ as the integral points of $V_2:=\RR_{\geq0}\times\RR^2$. We can now construct $\trop V$ by gluing these two halves together along the plane $\Pi=\{0\}\times \RR^2$, in such a way that the underlying real manifold of $\trop V$ is homeomorphic to $\RR^3$ but the integral affine structure on $\trop V$ is dual to that of $\trop U$.

Note that the obvious identification $\trop V := V_1 \sqcup V_2 / \left( (0,b,c) \sim (0,b,c)\right)$ produces a real manifold homeomorphic to $\RR^3$ and therefore lets us identify the integral points $\trop V(\ZZ)$ with the lattice $\ZZ^3\subset \RR^3$. However it does not specify an integral affine structure on $\trop V$, since there is no information as to how continue drawing straight lines that pass from one chart into the other. To fix this, for every $p\in\Pi$ which is not in the singular locus of $\trop V$ we need to provide $A_p\in\SL(3,\ZZ)$ such that for any piecewise linear map $\gamma\colon[0,1]\to \trop V$ and $t\in(0,1)$ with $\gamma(t) = p\in \Pi$, $\gamma([t-\varepsilon,t])\subset V_1$ and $\gamma([t,t+\varepsilon])\subset V_2$ for some $\varepsilon>0$, the matrix $A_p$ gives us the tangent vector $\gamma'(t+\epsilon) = A_p\gamma'(t-\epsilon)$ of a straight line $\gamma$ extending into $V_2$.

If $U$ had been a toric hypersurface defined by the binomial equation $xz = y^2$ then we recover $M_\RR$ by choosing $A_p$ to be the matrix $A_+ = \left(\begin{smallmatrix} 1&0&0\\-2&1&0\\0&0&1 \end{smallmatrix}\right)$ for all $p\in \Pi$, since this identifies $z,y,t$ with $x^{-1}y^2,y,t$. Alternatively, if $U$ had been defined by the binomial equation $xz = yt^n$ then we recover $M_\RR$ by choosing $A_p$ to be the matrix $A_- = \left(\begin{smallmatrix} 1&0&0\\-1&1&0\\-n&0&1 \end{smallmatrix}\right)$ for all $p\in \Pi$. Since $U$ is defined by the trinomial equation $xz=y^2+yt^n$ we use both by picking a line $\ell^\star\subset \Pi$ which will be the singular locus of $\trop V$ and which splits $\Pi\setminus \ell^\star = \Pi^+\sqcup\Pi^-$. Now we use $A_p=A_\pm$ for all $p\in \Pi^\pm$, cf.\ Figure~\ref{fig!dual-cone}.

\paragraph{The dual cone $\sigma^\star$.}
To construct the dual cone $\sigma^\star$ we note that the monomials from $S_1$ which are regular on $Y$ are precisely those of the form $x^ay^bt^c$ for $a,b,c\in\ZZ_{\geq0}^3$, and therefore these generate a cone $\RR_{\geq0}^3\subset V_1$. Similarly the monomials from $S_2$ which are regular on $Y$ are those of the form $z^ay^bt^c$ for $a,b,c\in\ZZ_{\geq0}^3$, and generate a cone $\RR_{\geq0}^3\subset V_2$. We define $\sigma^\star$ to be the closed convex cone in $\trop V$ obtained by gluing these two halves together according to the identification described above. This is shown in Figure~\ref{fig!dual-cone}.

\begin{figure}[htbp]
\begin{center}
\begin{tikzpicture}[scale=1.5,font=\small]
   \node at (0,0) [label={below:$(1,0,0)$}] {$\bullet$};
   \node at (1,0) [label={below:$(0,1,0)$}] {$\bullet$};
   \node at (2,0) [label={below:$(-1,2,0)$}] {$\bullet$};
   \node at (1,1) [label={left:$(0,0,1)$}] {$\bullet$};
   \node at (1,1/2) {$\times$};
   \draw (0,0) -- (2,0) -- (1,1) -- cycle;
   \draw[gray] (1,1/2) -- (1,3/2);
   
   \begin{scope}[xshift=6cm]
   \node at (0,0) [label={below:$(1,0,0)$}] {$\bullet$};
   \node at (1,0) [label={-45:$(0,1,0)$}] {$\bullet$};
   \node at (4/3,1) [label={right:$(-1,1,n)$}] {$\bullet$};
   \node at (1,1) [label={left:$(0,0,1)$}] {$\bullet$};
   \node at (1,1/2) {$\times$};
   \draw (0,0) -- (1,0) -- (4/3,1) -- (1,1) -- cycle;
   \draw[gray] (1,1/2) -- (1,-1/2);
   \end{scope}
   
   \begin{scope}[xshift=3.25cm, yshift=1cm]
   \node at (0-0.2,0) [label={below:$x$}] {$\bullet$};
   \node at (1-0.2,0) [label={below:$y$}] {$\bullet$};
   \node at (1-0.2,1) [label={above:$t$}] {$\bullet$};
   \node at (1+0.2,0) [label={below:$y$}] {$\bullet$};
   \node at (1+0.2,1) [label={above:$t$}] {$\bullet$};
   \node at (2+0.2,1) [label={above:$z$}] {$\bullet$};
   \draw (0.8,1) -- (-0.2,0) -- (0.8,0);
   \draw (1.2,1) -- (2.2,1) -- (1.2,0);
   \draw[dashed] (0.8,1) -- (0.8,0) (1.2,1) -- (1.2,0);
   \draw[gray] (1,-0.5) -- (1,1.5);
   \node[fill=white] at (1,0.5) {$\times$};
   \node[fill=white] at (1.1,-0.6) {$\Pi^+$};
   \node[fill=white] at (1.1, 1.6) {$\Pi^-$};
   
   \draw[->] (-0.5,0) -- (-1.25,-0.5);
   \draw[->] (2,0) -- (2.75,-0.5);
   
   \node at (0.5-0.2,1.3) {$V_1$};
   \node at (1.5+0.2,1.3) {$V_2$};
   \end{scope}
\end{tikzpicture}
\caption{The construction of the dual cone $\sigma^\star$.}
\label{fig!dual-cone}
\end{center}
\end{figure}
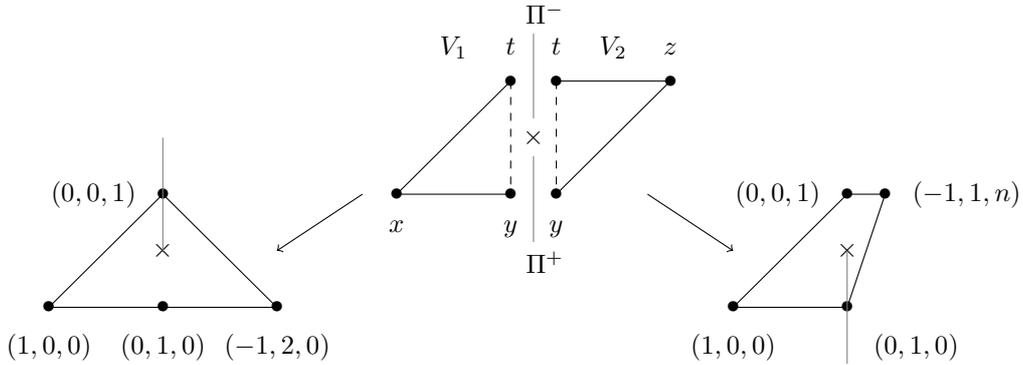

\paragraph{Monodromy.}
Because of the difference in the way we identify $V_1$ and $V_2$ either side of the line $\ell^\star$ there is a monodromy action as we travel anticlockwise around $\ell^\star\subset \trop V$. This monodromy is represented by the matrix
\[ M_{\ell^\star} = A_-^{-1}A_+  = \begin{pmatrix}
1 & 0 & 0 \\
-1 & 1 & 0 \\
 n & 0 & 1
\end{pmatrix} \]
and we note that this is the inverse transpose of the monodromy $M_\ell$ around $\ell\subset \trop U$, as expected.

\paragraph{Dual pairing.}
Since points $\nu\in\trop U(\ZZ)$ correspond to divisorial valuations $\nu\colon \CC(U)^\times \to \ZZ$ and points $\vartheta\in\trop V(\ZZ)$ correspond to monomials $\vartheta\in\CC[U]$ we have an intersection pairing given by evaluating $\nu$ on $\vartheta$
\[ \langle{\cdot},{\cdot}\rangle \colon \trop U(\ZZ)\times \trop V(\ZZ)\to \ZZ, \qquad \langle \nu,\vartheta\rangle = \nu(\vartheta) \]
and this extends to a pairing $\langle{\cdot},{\cdot}\rangle \colon \trop U\times \trop V\to \RR$ by scaling (to extend from $\ZZ$ to $\QQ$) and continuity (to extend from $\QQ$ to $\RR$). By construction this pairing is cooked up to be linear either side of the hyperplane $\Pi\subset \trop U$ and similarly it is also linear either side of the hyperplane $H\subset \trop U$, as shown in Figure~\ref{fig!pairing}.

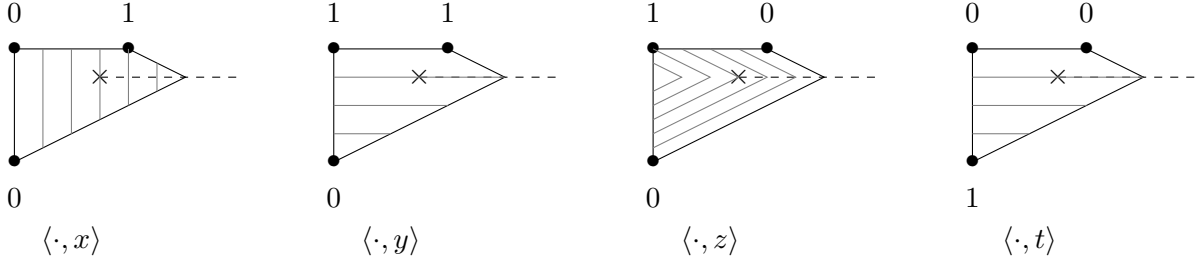
\begin{figure}[htbp]
\begin{center}
\begin{tikzpicture}[scale = 1.5]
    \begin{scope}
    \node at (1/2,-0.7) {$\langle{\cdot},x\rangle$};
    \node at (0,0) [label={below:$0$}] {$\bullet$};
    \node at (0,1) [label={above:$0$}] {$\bullet$};
    \node at (1,1) [label={above:$1$}] {$\bullet$};
    \draw (0,0) -- (0,1) -- (1,1) -- (6/4, 3/4) -- (0,0);  
    \node at (3/4, 3/4) {$\times$};  
    \draw[dashed] (3/4, 3/4) -- (2,3/4);
    
    \draw[gray] (1/4,1) -- (1/4,1/8);
    \draw[gray] (2/4,1) -- (2/4,2/8);
    \draw[gray] (3/4,1) -- (3/4,3/8);
    \draw[gray] (4/4,1) -- (4/4,4/8);
    \draw[gray] (5/4,7/8) -- (5/4,5/8);
    \end{scope}
    \begin{scope}[xshift = 2.8cm]
    \node at (1/2,-0.7) {$\langle{\cdot},y\rangle$};
    \node at (0,0) [label={below:$0$}] {$\bullet$};
    \node at (0,1) [label={above:$1$}] {$\bullet$};
    \node at (1,1) [label={above:$1$}] {$\bullet$};
    \draw (0,0) -- (0,1) -- (1,1) -- (6/4, 3/4) -- (0,0);  
    \node at (3/4, 3/4) {$\times$};  
    \draw[dashed] (3/4, 3/4) -- (2,3/4);
    
    \draw[gray] (0,3/4) -- (6/4,3/4);
    \draw[gray] (0,2/4) -- (4/4,2/4);
    \draw[gray] (0,1/4) -- (2/4,1/4);
    \end{scope}
    \begin{scope}[xshift = 5.6cm]
    \node at (1/2,-0.7) {$\langle{\cdot},z\rangle$};
    \node at (0,0) [label={below:$0$}] {$\bullet$};
    \node at (0,1) [label={above:$1$}] {$\bullet$};
    \node at (1,1) [label={above:$0$}] {$\bullet$};
    \draw (0,0) -- (0,1) -- (1,1) -- (6/4, 3/4) -- (0,0);  
    \node at (3/4, 3/4) {$\times$};  
    \draw[dashed] (3/4, 3/4) -- (2,3/4);
    
    \draw[gray] (0,7/8) --  (1/4,3/4) -- (0,5/8);
    \draw[gray] (0,1) --  (2/4,3/4) -- (0,4/8);
    \draw[gray] (1/4,1) --  (3/4,3/4) -- (0,3/8);
    \draw[gray] (2/4,1) --  (4/4,3/4) -- (0,2/8);
    \draw[gray] (3/4,1) --  (5/4,3/4) -- (0,1/8);
    \end{scope}
    \begin{scope}[xshift = 8.4cm]
    \node at (1/2,-0.7) {$\langle{\cdot},t\rangle$};
    \node at (0,0) [label={below:$1$}] {$\bullet$};
    \node at (0,1) [label={above:$0$}] {$\bullet$};
    \node at (1,1) [label={above:$0$}] {$\bullet$};
    \draw (0,0) -- (0,1) -- (1,1) -- (6/4, 3/4) -- (0,0);  
    \node at (3/4, 3/4) {$\times$};  
    \draw[dashed] (3/4, 3/4) -- (2,3/4);
    
    \draw[gray] (0,3/4) -- (6/4,3/4);
    \draw[gray] (0,2/4) -- (4/4,2/4);
    \draw[gray] (0,1/4) -- (2/4,1/4);
    \end{scope}
\end{tikzpicture}
\caption{The functions $\langle {\cdot}, x\rangle$ etc., viewed on the height 1 slice of $\sigma$.}
\label{fig!pairing}
\end{center}
\end{figure}

\subsection{Recovering the coordinate patches on a $(-2,0)$-flop}

Now that we have constructed $\trop U$ and $\trop V$ with their dual pairing, we can dualise any convex cone $\tau\subset \trop U$ as we would in the toric setting; by defining $\tau^\star = \{ v\in \trop V : \langle u,v \rangle \geq 0 \text{ for all } u \in \tau \}$. In particular, we would now like to construct the two small resolutions $X,X'$ appearing on either side of $(-2,0)$-flop in an analogous way to which the Atiyah flop can be constructed in toric geometry. Namely, we would like to take one of the subdivisions $\sigma = \tau_1\cup \tau_2$ appearing as one side of our combinatorial model of the $(-2,0)$-flop (as in Figure~\ref{fig!20flop}), dualise each half $\tau_i$ to get $\tau_i^\star\subset \trop V$, build an affine variety $W_i=\Spec \CC[\tau_i^\star\cap \trop V(\ZZ)]$ and finally glue $W_1$ and $W_2$ together to recover either $X$ or $X'$.  

Unfortunately we run into at least two problems as we try to do this. Let $\sigma=\tau_1\cup\tau_2$ be the subdivision on the lefthand side of Figure~\ref{fig!20flop}, where $\tau_1=\langle \rho_2,\rho_3,\ell \rangle$ is the left half and $\tau_2=\overline{\sigma\setminus \tau_1}$ is the right half. The first problem is that only $\tau_1$ is actually convex in $\trop U$. Indeed $\tau_2$ contains all three rays of $\sigma$, and so the convex hull of $\tau_2$ should be the whole of $\sigma$. Nevertheless we can dualise $\tau_1$.

\begin{prop}
If we dualise $\tau_1\subset \trop U$ then the integral points of $\tau_1^\star$ correspond to theta functions generating the coordinate ring of the affine variety
\[ W_1 = \Spec\CC[\tau_1^\star\cap \trop V(\ZZ)] = \CC\left[x,y,y^{-1}z,t\right] / \big(xz - y(y+t^n)\big) \]
which comprises one affine patch of one side of the flop over $(P\in Y)$.
\end{prop}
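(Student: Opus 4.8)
The plan is to compute $\tau_1^\star$ completely explicitly, using the two charts $V_1,V_2$ of $\trop V$ and the three generators $d_2=(0,1,0)$, $d_3=(0,0,1)$, $f_n=(n,n,1)$ of $\tau_1=\langle\rho_2,\rho_3,\ell\rangle$. Each generator is a divisorial valuation which is \emph{linear} on each of the monomial sets $S_1$ and $S_2$, so the membership condition $\vartheta\in\tau_1^\star$ reduces to three linear inequalities on each chart; reading off the resulting monoid of lattice points and its Hilbert basis will produce the claimed theta functions. I would first record the valuations. The values of $\nu_{d_2}$ and $\nu_{d_3}$ on $x,y,z,t$ can be read straight off Figure~\ref{fig!pairing}, giving $\nu_{d_2}(x^ay^bt^c)=b$, $\nu_{d_2}(z^ay^bt^c)=a+b$, and $\nu_{d_3}=c$ on both $S_1$ and $S_2$.

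For the third generator I would compute $\nu_{f_n}$ directly from the relation $xz=y(y+t^n)$: since $\nu_{f_n}(x)=\nu_{f_n}(y)=n$ and $\nu_{f_n}(t)=1$, the identity $\nu_{f_n}(z)=\nu_{f_n}(y)+\nu_{f_n}(y+t^n)-\nu_{f_n}(x)=n+\min(n,n)-n=n$ gives $\nu_{f_n}(x^ay^bt^c)=\nu_{f_n}(z^ay^bt^c)=na+nb+c$. The delicate point here is that $f_n$ lies on the singular ray $\ell$, so one must check that $\nu_{f_n}$ really is a well-defined divisorial valuation and that this naive $\TT_1$-monomial computation is legitimate; this is exactly the valuation of the final exceptional divisor $F_n\cong\PP^1\times\PP^1$ of the pagoda, and the fact that both terms attain the minimum in $\min(\nu(y),n\nu(t))=\min(n,n)$ is the manifestation of the bend in the affine structure along $\ell$.

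With the valuations in hand the dual cone is immediate. On $S_1$ the inequalities $\langle d_2,\cdot\rangle,\langle d_3,\cdot\rangle,\langle f_n,\cdot\rangle\geq0$ read $b\geq0$, $c\geq0$, $na+nb+c\geq0$, which together with $a\geq0$ cut out the octant $a,b,c\geq0$; on $S_2$ they read $a+b\geq0$, $c\geq0$, $n(a+b)+c\geq0$, which together with $a\geq0$ cut out $\{a\geq0,\,c\geq0,\,a+b\geq0\}$. Taking Hilbert bases of these two free monoids and gluing, the integral points of $\tau_1^\star$ are generated by the four theta functions $x,y,t$ (from $S_1$) and $y^{-1}z$ (the point $(1,-1,0)$ of $S_2$), noting that $z=(y^{-1}z)\cdot y$ is redundant. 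Hence $\CC[\tau_1^\star\cap\trop V(\ZZ)]$ is the subalgebra of $\CC[U]$ generated by $x,y,t,y^{-1}z$. Using $xz=y(y+t^n)$ and the fact that $y$ is a unit on $U$ gives $x\cdot(y^{-1}z)=y+t^n$, so $y=x(y^{-1}z)-t^n$ is redundant and $W_1\cong\Aa^3$ with coordinates $x,\ \alpha:=y^{-1}z,\ t$. Comparing with the proof of Lemma~\ref{lem!resolution}, where $\alpha/\beta=z/y$ on $X$, this is precisely the chart $X|_{\beta=1}\cong\Aa^3_{\alpha,x,t}$ on the side of the flop dual to the lefthand subdivision of Figure~\ref{fig!20flop} (consistent with $\tau_1$ containing $\rho_2,\rho_3$ but not $\rho_1$, matching $\widetilde D_1=\emptyset$ there).

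The step I expect to be the main obstacle is confirming that the $\CC$-span of the theta functions indexed by $\tau_1^\star$ really forms this subalgebra, i.e.\ that it is closed under multiplication. The difficulty is that products of theta functions need not be single theta functions: for instance $x\cdot z=y^2+yt^n$ is a \emph{sum}, reflecting the scattering inherent in the cluster structure, and one must check that every term appearing in such an expansion again has index in $\tau_1^\star$. I would sidestep this with the generator-based argument above: since the subalgebra is the polynomial ring $\CC[x,\alpha,t]$, I can expand each of its monomials $x^i\alpha^j t^k=x^iy^{-j}z^jt^k$ in the theta basis via $xz=y^2+yt^n$ and verify termwise that the exponents satisfy the defining inequalities of $\tau_1^\star$ (splitting into the cases $i\geq j$, landing in $S_1$, and $i<j$, landing in $S_2$). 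This simultaneously establishes equality of the two $\CC$-vector spaces and the required closure, completing the identification of $W_1$ as one affine patch of one side of the flop.
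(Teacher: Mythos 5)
Your proposal is correct and takes essentially the same route as the paper: compute the pairing of the generators $d_2$, $d_3$, $f_n=(n,n,1)$ of $\tau_1$ against the monomial bases $S_1$, $S_2$, read off the dual cone chart by chart as the free monoids on $x,y,t$ and $y,y^{-1}z,t$, and identify the resulting ring with the chart $\CC[Y][y^{-1}z]$ of the blowup of $D_2$. The only difference is that you additionally justify two points the paper's proof takes for granted, namely the value $\nu_{f_n}(z)=n$ along the singular ray and the multiplicative closure of the span of theta functions indexed by $\tau_1^\star$; both checks go through as you describe.
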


\begin{proof}
Let $f\colon X\to Y$ be the small resolution given by blowing up the Weil divisor $D_2=\VV(y,z)\subset Y$. Then $X$ can be glued together from two affine patches $X= W_1\cup W_2$ where $\CC[W_1]=\CC[Y][y^{-1}z]$ and $\CC[W_2]=\CC[Y][yz^{-1}]$. Now we can construct $\tau_1^\star$ by dualising $\tau_1$ in each half $(\tau_1^\star)_1\subset V_1$ and $(\tau_2^\star)_1\subset V_2$ and identifying the two halves together along $\Pi$ to get $\tau_1^\star\subset \trop V$, exactly as we did to construct $\sigma^\star$ above. If $v=(n,n,1)$ is the primitive generator of $\ell$, then the value of the pairing between $d_1,d_2,v$ and $x,y,z,t$ is given in the following table. 
\[ \begin{array}{|c|cccc|}\hline
 & x & y & z & t \\\hline
d_2 & 0 & 1 & 1 & 0 \\
d_3 & 0 & 0 & 0 & 1 \\ 
v & n & n & n & 1 \\ \hline
\end{array} \]
Therefore the only monomials in $S_1$ that are positive on $\tau_1$ are those corresponding to integral points in the cone generated $x,y,t$ and the only monomials in $S_2$ that are positive on $\tau_1$ are those corresponding to integral points in the cone generated $y,y^{-1}z,t$. Thus we obtain $W_1= \Spec\CC[\tau_1^\star\cap \trop V(\ZZ)]$. 
\end{proof}

The second problem is that the second affine patch of $X$ has coordinate ring $W_2=\CC[Y][yz^{-1}]$, but the coordinate $yz^{-1} = x(y+t^n)^{-1}$, is not a regular function on $U$ and hence cannot be written in terms of the theta functions on $U$. 



\section{Mirror symmetry} \label{sect!mirror}

As an interesting consequence of the construction of the cone $\sigma\subset\trop U$ we can produce a singularity $(P^\star\in Y^\star)$ which is the compactification of the mirror cluster variety $V$ corresponding to the dual cone $\sigma^\star\subset \trop V$. In other words, we construct an affine 3-fold singularity $Y^\star = \Spec R$, whose coordinate ring $R = \CC[\vartheta_v : v \in \sigma\cap \trop U(\ZZ)]$ is generated by the theta functions on $V$ corresponding to the integral points of $\sigma$. 

\subsection{The theta functions on $V$}
We first describe the theta functions on $V$ in terms of a cluster torus chart $\TT^\star=\CC^\times\otimes M \hookrightarrow V$. Since $U$ is obtained by blowing up the curve $Z=\VV\left(1+u^{(0,-1,n)}\right)$ in the boundary component $B_{(1,0,0)}$ of a toric pair $(T,B)$, by the discussion in \S\ref{sect!generalising} the Fock--Goncharov dual $V$ is obtained by blowing up the curve $Z^\star=\VV\left(1+v^{(1,0,0)}\right)$ in the boundary component $B^\star_{(0,-1,n)}$ of a toric pair $(T^\star,B^\star)$. From this description the theta functions on $V$ are given in terms of a \emph{scattering diagram} on $\trop U$. Let $\TT^\star = (\CC^\times)^3_{u,v,w}$ have coordinates $u = v^{(1,0,0)}$, $v = v^{(0,1,0)}$ and $w = v^{(0,0,1)}$. Since $V$ is given by a single nontoric blowup, the scattering diagram used to construct $\CC[V]$ is very simple and consists of a single wall $H=(0,-1,n)^\perp\subset \trop U$ with an attached scattering function $f_H=1 + u$. There is a wall crossing automorphism $\theta_H\colon \CC[\TT^\star] \to \CC[\TT^\star]$ given by
\[ \theta_H(u^av^bw^c) = u^av^bw^c(1+u)^{-\langle (a,b,c), (0,-1,n)\rangle}. \]
Pick a (generic, irrational) basepoint $p\in\trop U$ on one side of $H$, the side on which $\langle p,(0,1,-n)\rangle<0$ say. For any $q=(a,b,c)\in \trop U(\ZZ)$, an expansion of the theta function $\vartheta_q\in\CC[V]$ as a Laurent polynomial in $u,v,w$ can be described in terms of a special class of piecewise linear lines in $\trop U$ called \emph{broken lines}, which begin at $p$, bend finitely many times and end up parallel to $q$. The upshot in our simple situation is that if $p,q$ lie on the same side of $H$ then any such broken line is the usual straight line in $N_\RR$ that starts at $p$ and leaving parallel to $q$. This corresponds to the monomial $\vartheta_q=u^av^bw^c$. If $q$ lies on the other side of $H$ to $p$, then any broken line must cross $H$ and the formula for the theta function $\vartheta_q$ reduces to $\vartheta_q = \theta_H(u^av^bw^c)$. To summarise:
\begin{lem}
For a point $(a,b,c)\in\trop U(\ZZ)$, the theta function $\vartheta_{(a,b,c)}$ on $V$ is given by
\[ \vartheta_{(a,b,c)} = \begin{cases}
u^a v^b w^c & \text{if } b -nc \leq 0 \\
u^a v^b w^c (1+u)^{b-nc} & \text{if } b - nc \geq 0.
\end{cases} \] 
\end{lem}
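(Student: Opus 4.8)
The plan is to read both cases directly off the single-wall broken-line description already established in the paragraph preceding the statement, the only genuine work being to track signs carefully. Since the scattering diagram for $V$ consists of the single wall $H=(0,-1,n)^\perp$ with attached function $f_H=1+u$, the key structural fact I would exploit is that a broken line for $\vartheta_{(a,b,c)}$ can bend \emph{at most once} --- and only when it crosses $H$ --- so that $\vartheta_{(a,b,c)}$ is not an honest sum of contributions but a single Laurent monomial, determined entirely by whether the asymptotic direction $(a,b,c)$ lies on the same side of $H$ as the basepoint $p$.

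First I would record the relevant linear functional. Evaluating the pairing gives $\langle (a,b,c),(0,-1,n)\rangle = nc-b$, so the sign of $b-nc$ detects which open half-space of $\trop U\setminus H$ the point lies in, while $b-nc=0$ cuts out $H$ itself. The chosen basepoint satisfies $\langle p,(0,1,-n)\rangle<0$, i.e.\ $\langle p,(0,-1,n)\rangle>0$, which places $p$ in the half-space where $b-nc<0$.

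Next I would split into the two cases. If $b-nc\le 0$ then $(a,b,c)$ lies in the closed half-space containing $p$, so the unique broken line ending at $p$ with asymptotic direction $(a,b,c)$ never meets $H$; it is the straight line in $N_\RR$ and contributes only its initial monomial, giving $\vartheta_{(a,b,c)}=u^av^bw^c$. If instead $b-nc\ge 0$ then $(a,b,c)$ lies on the opposite side of $H$ from $p$, so the unique broken line must cross $H$ exactly once, and crossing applies the wall-crossing automorphism $\theta_H$ to the incoming monomial. Using $\theta_H(u^av^bw^c)=u^av^bw^c(1+u)^{-\langle(a,b,c),(0,-1,n)\rangle}=u^av^bw^c(1+u)^{b-nc}$ yields the second line. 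The two formulas agree on the overlap $b-nc=0$, since $(1+u)^0=1$, so the case split is consistent.

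The one point that needs genuine care --- and the step I expect to be the main obstacle --- is justifying that in each case there is a \emph{single} broken line and that no further bends occur. I would argue that, with only one wall present, any bend must happen on $H$, and that a broken line cannot re-cross $H$ after passing through it (the asymptotic direction together with straight-line geometry in $N_\RR\setminus H$ forbids a second crossing), so exactly one monomial survives. The remaining subtlety is purely one of sign conventions: confirming that the exponent emerges as $+(b-nc)$ rather than its negative. This is precisely the computation $-\langle(a,b,c),(0,-1,n)\rangle=b-nc$ recorded above, and it is exactly what makes the answer land on $(1+u)^{b-nc}$ with the correct sign.
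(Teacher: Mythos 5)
Your proposal is correct and follows essentially the same route as the paper, which itself treats the lemma as a summary of the preceding broken-line discussion: a single wall $H=(0,-1,n)^\perp$ with function $1+u$, a basepoint $p$ on the side where $\langle p,(0,-1,n)\rangle>0$, and the wall-crossing automorphism $\theta_H$ applied exactly when $q$ lies on the opposite side, with the sign check $-\langle(a,b,c),(0,-1,n)\rangle=b-nc$ matching the paper's. The only slight imprecision is the claim that $\vartheta_{(a,b,c)}$ is ``a single Laurent monomial'': when $b-nc>0$ the answer $u^av^bw^c(1+u)^{b-nc}$ is a sum, with one broken line for each term in the binomial expansion (each picking a different term of $f_H^{b-nc}$ at the unique crossing of $H$), but their sum is exactly $\theta_H(u^av^bw^c)$, so the formula you derive is right.
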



\subsection{The mirror singularity}

Let $d_1,d_2,d_3,f_4,\ldots,f_{n+3}\in \sigma\cap \trop U(\ZZ)$ be the sequence of primitive integral points corresponding to the divisors $D_1,D_2,D_3,F_1,\ldots,F_n$ appearing in the construction of Reid's pagoda, and let $\vartheta_i=\vartheta_{d_i}$ for $i=1,2,3$ and $\vartheta_{i+3}=\vartheta_{f_i}$ for $i=1,\ldots,n$. 

\begin{thm}\label{thm!mirror-thm}
The mirror singularity to $(P\in Y)$ is given by the nonisolated affine 3-fold singularity $(P^\star\in Y^\star) = (0\in \VV(I)\subset \Aa^{n+3}_{\vartheta_1,\ldots,\vartheta_{n+3}})$, where $I$ is the ideal of codimension $n$ defined by the $2\times 2$ minors of the following $2\times n$ matrix
\[ \begin{pmatrix}
\vartheta_1+\vartheta_2 & \vartheta_3 & \vartheta_4 & \cdots & \vartheta_{n+1} & \vartheta_{n+2} \\
\vartheta_1\vartheta_2  & \vartheta_4 & \vartheta_5 & \cdots & \vartheta_{n+2} & \vartheta_{n+3} 
\end{pmatrix}. \]
\end{thm}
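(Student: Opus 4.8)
The plan is to make the surjection $\phi\colon \CC[X_1,\dots,X_{n+3}]\to \CC[u^{\pm1},v^{\pm1},w^{\pm1}]$, $X_i\mapsto \vartheta_i$, completely explicit and then identify its kernel with the determinantal ideal $I$. Using the preceding lemma, and noting that $d_1=(1,1,0)$, $d_2=(0,1,0)$ lie in the region $b-nc\ge 0$ while $d_3=(0,0,1)$ and $f_i=(i,i,1)$ lie in $b-nc\le 0$, I would first record
\[ \vartheta_1=uv(1+u),\quad \vartheta_2=v(1+u),\quad \vartheta_3=w,\quad \vartheta_{i+3}=(uv)^iw\ \ (1\le i\le n). \]
The key computational observation is that \emph{every} column of the displayed matrix is a monomial multiple of the single vector $(1,uv)$: one has $\vartheta_1+\vartheta_2=v(1+u)^2$ and $\vartheta_1\vartheta_2=uv\cdot v(1+u)^2$, so the first column is $v(1+u)^2\,(1,uv)$, while a Hankel column $(\vartheta_{j+1},\vartheta_{j+2})$ equals $(uv)^{\,j-2}w\,(1,uv)$. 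Hence the matrix has rank one over $\CC(u,v,w)$ and all of its $2\times 2$ minors vanish, giving the easy inclusion $I\subseteq\ker\phi$.

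Next I would verify that $\vartheta_1,\dots,\vartheta_{n+3}$ really generate $R=\CC[\vartheta_v:v\in\sigma\cap\trop U(\ZZ)]$, so that $\phi$ presents $R$ and $\CC[X]/I\twoheadrightarrow R$. By Theorem~\ref{thm!pagoda} the pagoda subdivision writes $\sigma$ as a union of smooth cones whose rays lie among $d_1,d_2,d_3,f_1,\dots,f_n$, so every lattice point of $\sigma$ is a non-negative integer combination of these inside one smooth cone. Since the scattering diagram for $V$ has the single wall $H=(0,-1,n)^\perp$ with function $1+u$, the product $\vartheta_v\vartheta_{v'}$ equals $\vartheta_{v+v'}$ whenever $v,v'$ lie on the same side of $H$, and otherwise differs from it by correction terms of strictly larger $u$-degree; for instance $\vartheta_2\vartheta_3=\vartheta_{(0,1,1)}+\vartheta_4$, so $\vartheta_{(0,1,1)}=\vartheta_2\vartheta_3-\vartheta_4$. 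Arguing by induction on the height $h(v)=b+c$ and peeling off such corrections, I would express each $\vartheta_v$ as a polynomial in the chosen generators.

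The substantive part is the reverse inclusion $\ker\phi\subseteq I$, which I would get by proving that $\CC[X]/I$ is an integral domain of dimension $3$. Because $R\subset\CC[u^{\pm1},v^{\pm1},w^{\pm1}]$ is a domain and the map $(u,v,w)\mapsto(\vartheta_1,\dots,\vartheta_{n+3})$ is birational onto its image (one recovers $u=\vartheta_1/\vartheta_2$, $w=\vartheta_3$, and then $v$), $R$ has dimension $3$; a surjection of finitely generated $\CC$-domains of equal dimension has height-zero, hence trivial, kernel, so it is enough to show $I$ is prime of codimension $n$. Geometrically $\VV(I)$ is the rank-$\le 1$ locus of the matrix: the columns other than the first form a Hankel matrix cutting out the cone $C$ over the degree-$n$ rational normal curve in $\Aa^{n+1}_{\vartheta_3,\dots,\vartheta_{n+3}}$ (irreducible, dimension $2$), and over the open locus of $C$ the remaining minors force $(\vartheta_1+\vartheta_2,\vartheta_1\vartheta_2)$ to be proportional to the common direction $(1,\mu)$, imposing the single irreducible conic relation $\vartheta_1\vartheta_2=\mu(\vartheta_1+\vartheta_2)$ in the fibre, where $\mu=\vartheta_4/\vartheta_3$. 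This exhibits $\VV(I)$ as an irreducible conic bundle of dimension exactly $3$ over $C$, so $\codim I=n$. To upgrade this to the scheme-theoretic statement I would compute a Gr\"obner basis of the minors and check that the initial ideal is squarefree, so that $I$ is radical; radicality together with irreducibility then yields that $I$ is prime, and the surjection $\CC[X]/I\to R$ is an isomorphism.

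The hard part will be precisely this last step. The matrix is \emph{not} a matrix of linear forms --- the entry $\vartheta_1\vartheta_2$ is quadratic --- so Eisenbud's theory of $1$-generic matrices and the standard primeness and Cohen--Macaulayness results for rational normal scrolls do not apply directly, and one cannot simply invoke a determinantal structure theorem. The main obstacle is therefore to control the degenerate loci (the vertex of $C$, and the fibres over $\mu=0$) in order to rule out spurious extra components of dimension $3$ and to confirm radicality. I expect this to require either the explicit Gr\"obner calculation sketched above, or a change of coordinates that linearises the first column at the cost of passing to the double cover of the $(\vartheta_1,\vartheta_2)$-plane on which $\vartheta_1,\vartheta_2$ become coordinates rather than their symmetric functions.
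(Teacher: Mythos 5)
Your strategy is essentially the paper's: identify the generators $\vartheta_1,\dots,\vartheta_{n+3}$, show they generate the theta-function ring of $\sigma$ via the pagoda subdivision, and identify the relations with the $2\times 2$ minors. Your explicit formulas $\vartheta_1=uv(1+u)$, $\vartheta_2=v(1+u)$, $\vartheta_{i+3}=(uv)^iw$ and the rank-one observation (every column is a monomial multiple of $(1,uv)$) are correct and make the inclusion $I\subseteq\ker\phi$ cleaner than the paper's one-line ``ratio of the rows'' remark. For the generation step the paper is more careful than your sketch: it runs an induction on $n$ by subdividing at $f_1$ (so that $\tau_3\cong\sigma_{n-1}$ after applying $P\in\SL(3,\ZZ)$), refines the two remaining cones into $2n$ smooth cones each lying on one side of the wall, and proves the explicit identity $\vartheta_{(0,k,1)}=\sum_i(-1)^i\binom{k}{i}\vartheta_2^{k-i}\vartheta_{i+3}$. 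Your ``induction on height, peeling off corrections'' needs a secondary induction, since the wall-crossing corrections to $\vartheta_v\vartheta_{v'}$ change only the $u$-coordinate and hence preserve $h=b+c$; this is repairable (the height slices are compact) but as stated the induction is not well-founded.

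On the step you flag as hard — primality of $I$ — you have actually identified the one point the paper itself does not prove: it simply asserts that the minors cut out a reduced irreducible $3$-fold. Your fibre analysis over the cone $C$ already gives set-theoretic irreducibility and $\dim\VV(I)=3$ (the fibres over the vertex and over the $\mu=\infty$ locus have dimension $\le 2$, so contribute no component), hence $\codim I=n$. Note the matrix is $2\times(n+1)$, not $2\times n$, which is why the Eagon--Northcott bound $\codim\le n$ is attained. From there you do not need a Gr\"obner computation, and your worry about $1$-genericity is misplaced: the Eagon--Northcott/Hochster--Eagon perfection theorem for maximal minors applies to \emph{arbitrary} matrices once the codimension equals the expected one, so $\CC[X]/I$ is Cohen--Macaulay, hence unmixed with no embedded primes; combined with irreducibility and the fact that the localisation at $\vartheta_3$ is the reduced hypersurface $\vartheta_1\vartheta_2=\mu(\vartheta_1+\vartheta_2)$, this gives $I$ prime, and your dimension-count argument then closes the proof. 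With that substitution your proposal is complete and, on this point, more rigorous than the published proof.
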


\begin{proof}
Let $R\subset \CC[Y^\star]$ be the subring generated by the claimed set of generators $\vartheta_1,\ldots,\vartheta_{n+3}$. Note that the equations generating $R$ are given by the minors of the matrix, where the ratio between the rows is given by
\[ uv = \frac{\vartheta_1\vartheta_2}{\vartheta_1 + \vartheta_2} = \frac{\vartheta_4}{\vartheta_3} = \cdots = \frac{\vartheta_{n+3}}{\vartheta_{n+2}}, \]
and these minors define a reduced, irreducible affine variety of dimension 3. We will now prove that $R= \CC[Y^\star]$ by induction on $n$, so we will let $Y_n=Y$ and $\sigma_n=\sigma$. 

\paragraph{Base case.} When $n=1$ subdivide $\sigma_1$ into the five smooth cones pictured in Figure~\ref{fig!subdivision}(a), corresponding to the six theta functions $\vartheta_1,\ldots,\vartheta_4, \vartheta_{(0,1,1)}$ and $\vartheta_{(2,1,1)}$. Each cone is smooth, and if $q=av_i+bv_j+cv_k$ belongs to the cone $\langle v_i,v_j,v_k\rangle$ with $a,b,c\in\ZZ_{\geq0}$ then we easily see that $\vartheta_q=\vartheta_{v_i}^a\vartheta_{v_j}^b\vartheta_{v_k}^c$, so these six theta functions generate $\CC[Y_1^\star]$. From the relations $\vartheta_{(0,1,1)} = \vartheta_2\vartheta_3 - \vartheta_4$ and $\vartheta_{(2,1,1)} = \vartheta_1\vartheta_3 - \vartheta_4$ it follows that $\vartheta_{(0,1,1)},\vartheta_{(2,1,1)}\in R$, and thus the claim holds in this case.

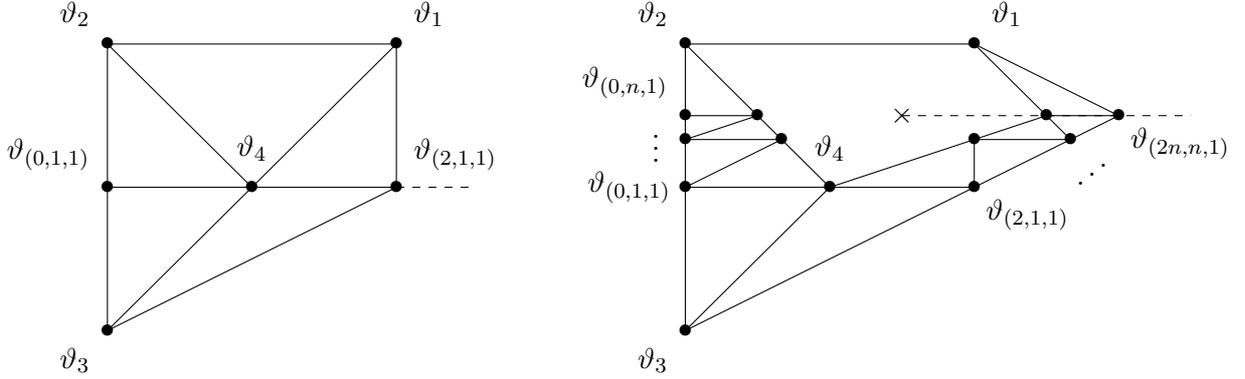
\begin{figure}[htbp]
\begin{center}
\begin{tikzpicture}[scale = 3.8]
    \begin{scope}
    \draw (0,0) -- (0,1) -- (1,1) -- (1, 1/2) -- (0,0);  
    \draw (0,1/2) -- (1,1/2) (0,0) -- (1,1) (1/2,1/2) -- (0,1);
    \draw[dashed] (1,1/2) -- (5/4,1/2);
    \node at (0,0) [label={[label distance=-5pt]225:$\vartheta_3$}] {$\bullet$};
    \node at (0,1/2) [label={[label distance=-5pt]135:$\vartheta_{(0,1,1)}$}] {$\bullet$};
    \node at (1/2,1/2) [label={above:$\vartheta_4$}] {$\bullet$};
    \node at (1,1/2) [label={[label distance=-5pt]45:$\vartheta_{(2,1,1)}$}] {$\bullet$};
    \node at (0,1) [label={[label distance=-5pt]135:$\vartheta_2$}] {$\bullet$};
    \node at (1,1) [label={[label distance=-5pt]45:$\vartheta_1$}] {$\bullet$};
    \end{scope}
    
    \begin{scope}[xshift = 2cm]
    \node (y5) at (3/4, 3/4) {$\times$};  
    \draw (0,0) -- (0,1) -- (1,1) -- (6/4, 3/4) -- (0,0);  
    \draw (0,1) -- (1/2,1/2) -- (5/4,3/4) -- (1,1) (0,0) -- (1/2,1/2);
    \draw[dashed]  (3/4,3/4) -- (7/4,3/4);
    \draw (0,3/4) -- (1/4,3/4) -- (0,2/3) -- (1/3,2/3) -- (0,1/2) -- (1,1/2) -- (1,2/3) -- (4/3,2/3) -- (5/4,3/4) -- (6/4,3/4);
    
    \node at (0,0) [label={[label distance=-5pt]225:$\vartheta_3$}] {$\bullet$};
    \node at (0,1) [label={[label distance=-5pt]135:$\vartheta_2$}] {$\bullet$};
    \node at (1,1) [label={[label distance=-5pt]45:$\vartheta_1$}] {$\bullet$};
    \node at (0,1/2) [label={[label distance=-5pt]180:$\vartheta_{(0,1,1)}$}] {$\bullet$};
    \node at (1/2,1/2) [label={90:$\vartheta_4$}] {$\bullet$};
    \node at (1,1/2) [label={[label distance=-8pt]-45:$\vartheta_{(2,1,1)}$}] {$\bullet$};
    \node at (0,2/3) [label={[label distance=-1pt]180:$\vdots$}] {$\bullet$};
    \node at (1/3,2/3) {$\bullet$};
    \node at (3/3,2/3) {$\bullet$};
    \node at (4/3,2/3) [label={[label distance=-12pt]-45:$\iddots$}] {$\bullet$};
    \node at (0/4,3/4) [label={[label distance=-5pt]135:$\vartheta_{(0,n,1)}$}] {$\bullet$};
    \node at (1/4,3/4) {$\bullet$};
    \node at (5/4,3/4) {$\bullet$};
    \node at (6/4,3/4) [label={[label distance=-8pt]-45:$\vartheta_{(2n,n,1)}$}] {$\bullet$};
    \end{scope}
\end{tikzpicture}
\caption{(a) The subdivision for the base case. (b) The subdivision for the induction step.}
\label{fig!subdivision}
\end{center}
\end{figure}

\paragraph{Induction step.}
For the induction step, write $\sigma_n = \tau_1\cup\tau_2\cup\tau_3$, as in the proof of Lemma~\ref{lem!pagoda}, by subdividing $\sigma_n$ at $f_1$. This is a subdivision of $\sigma_n$ into two smooth cones $\tau_1 = \left\langle d_2,d_3,f_1 \right\rangle$, $\tau_2 = \left\langle d_1,d_3,f_1 \right\rangle$ and $\tau_3\cong \sigma_{n-1}$. By the induction hypothesis all of the theta functions in $\tau_3$ can be written in terms of the theta functions $\vartheta_1,\vartheta_2,\vartheta_4,\ldots,\vartheta_{n+3}$. The cone $\tau_1$ can be further subdivided into $2n$ smooth cones
\[ \tau_2 = \bigcup \left\{ \renewcommand{\arraystretch}{1.5} \begin{matrix} 
\left\langle (0,k-1,1), (0,k,1), (1,k,1) \right\rangle & k=1,\ldots,n \\
\left\langle (0,k,1), (1,k,1), (1,k+1,1) \right\rangle & k=1,\ldots,n-1 \\
\left\langle (0,1,0), (0,n,1), (1,n,1) \right\rangle, 
\end{matrix} \right. \]
as shown in Figure~\ref{fig!subdivision}(b). To conclude that the theta functions in each one of these cones belongs to $R$, it is now enough to show that each $\vartheta_{(0,k,1)}\in R$ for $k=1,\ldots,n$. Since
\[ v^kw = \left(v + uv - uv\right)^kw = \sum_{i=0}^k (-1)^i\binom{k}{i}(v+uv)^{k-i} u^{i}v^{i}w \]
we must have $\vartheta_{(0,k,1)} = \sum_{i=0}^k (-1)^{i}\binom{k}{i}\vartheta_2^{k-i} \vartheta_{i+3}$. By a similar argument it follows that all of the theta functions in $\tau_2$ also belong to $R$, and thus $R= \CC[Y_n^\star]$.
\end{proof}

\subsection{A crepant resolution of $(P^\star\in Y^\star)$}
To describe the geometry of the situation we produce a crepant resolution of $(Y^\star,D^\star)$.

\begin{prop}
The pair $(Y^\star,D^\star)$ admits a crepant resolution $\mu\colon \widetilde Y^\star\to Y^\star$, with exceptional locus $\widetilde D_4^\star\cup \Gamma$, where $\widetilde D_4^\star$ is a divisor over the line of $cA_1$ singularities and $\Gamma\subset \widetilde D_3^\star$ is a rational curve with normal bundle $\mathcal N_{\Gamma/\widetilde Y^\star}=\sO(-1,-n)$ that meets $\widetilde D_4^\star$ transversely.
\end{prop}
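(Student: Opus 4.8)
The plan is to exploit the determinantal presentation of $Y^\star$ from Theorem~\ref{thm!mirror-thm}, made completely explicit. From the formula for $\vartheta_{(a,b,c)}$ one computes the generators directly: $\vartheta_1 = uv(1+u)$, $\vartheta_2 = v(1+u)$, $\vartheta_3 = w$ and $\vartheta_{i+3} = (uv)^i w$ for $i=1,\dots,n$. Thus the last $n+1$ generators are $w\cdot(1,uv,\dots,(uv)^n)$, a cone over a degree-$n$ rational normal curve with common ratio $uv$, while the special column $(\vartheta_1+\vartheta_2,\vartheta_1\vartheta_2)$ supplies the extra direction with the same ratio $uv = \vartheta_1\vartheta_2/(\vartheta_1+\vartheta_2)$. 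The first step is to locate the singular locus of $Y^\star$, either by the Jacobian criterion on the $2\times 2$ minors or from these parametrisations. I expect to find that $Y^\star$ is singular exactly along the $\vartheta_3$-axis $L=\{\vartheta_i=0 : i\neq 3\}$, with a transverse $A_1$ (hence a line of $cA_1$ singularities), together with a strictly worse point at $P^\star$, where the matrix drops to rank $0$.

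I would then resolve in two crepant stages. Stage one is the standard small resolution of a rank-$\le 1$ determinantal variety: take the closure $\widetilde Y^\star_1\subset Y^\star\times\PP^1_{\alpha:\beta}$ of the graph of the rational map to the common column direction $[\alpha:\beta]$, imposing $\alpha\cdot(\text{row }2)=\beta\cdot(\text{row }1)$. On the chart $\alpha=1$ one obtains the smooth hypersurface $\{\vartheta_1+\vartheta_2=\beta\vartheta_1\vartheta_2\}$ with $\vartheta_i=\beta^{\,n+3-i}\vartheta_{n+3}$, and on the chart $\beta=1$ one obtains $\{\vartheta_1\vartheta_2=\alpha(\vartheta_1+\vartheta_2)\}\times\Aa^1_{\vartheta_3}$, which is a product and hence singular precisely along the preimage of $L$ with constant transverse type $A_1$. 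Since the rank-$0$ locus is only $P^\star$, the morphism $\mu_1\colon \widetilde Y^\star_1\to Y^\star$ is small, so crepant, and its exceptional fibre is the rational curve $\Gamma=\mu_1^{-1}(P^\star)$. Stage two is the relative minimal resolution of this line of transverse $A_1$ singularities (blow up the reduced singular line); this is crepant because the minimal resolution of a Du Val $A_1$ singularity has discrepancy $0$, constant along the family, and it introduces the ruled exceptional divisor $\widetilde D_4^\star$ over $L$. Composing, $\mu\colon \widetilde Y^\star\to Y^\star$ is a crepant resolution with exceptional locus $\widetilde D_4^\star\cup\Gamma$.

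It then remains to read off the geometry of $\Gamma$. In the chart $\beta=1$ the curve $\Gamma$ is the $\alpha$-axis and $L$ is the $\vartheta_3$-axis, two coordinate lines meeting at the $A_1$ vertex; after blowing up $L$ their strict transforms separate, and I would check transversality of $\Gamma$ and $\widetilde D_4^\star$ at the single point over $\Gamma\cap L$, and that $\Gamma$ lies in the strict transform $\widetilde D_3^\star$. For the normal bundle I would cover $\Gamma\cong\PP^1$ by the two charts, trivialise $\mathcal N_{\Gamma/\widetilde Y^\star}$ on each, and read the splitting type from the transition functions. The scroll direction transforms by $\vartheta_3=\beta^{\,n}\vartheta_{n+3}$ across $\Gamma$, giving an $\sO(-n)$ summand, while the special-column direction contributes $\sO(-1)$ once the node is resolved, yielding $\mathcal N_{\Gamma/\widetilde Y^\star}=\sO(-1,-n)$.

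The main obstacle will be this last normal-bundle computation. Because $\Gamma$ runs into the $A_1$ line, the two local trivialisations must be matched across the resolved node, and it is exactly there—rather than in the crepancy, which follows formally from smallness together with the $A_1$ minimal resolution—that the exponent $n$ has to be extracted with care. A concrete cross-check is the case $n=1$, where $Y^\star_1=\VV\big((\vartheta_1+\vartheta_2)\vartheta_4-\vartheta_1\vartheta_2\vartheta_3\big)$ is a single hypersurface so the whole calculation can be carried out by hand, and the predicted $\sO(-1,-1)$ is consistent with the base case of Theorem~\ref{thm!mirror-thm}.
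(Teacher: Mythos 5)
Your proposal follows essentially the same route as the paper: the first stage is exactly the paper's small morphism $W\subset Y^\star\times\PP^1_{\alpha_0:\alpha_1}$ cut out by adjoining the column of ratios, yielding the same two charts (one smooth, one the product of an $A_1$ surface with $\Aa^1_{\vartheta_3}$), followed by the crepant blowup of the $A_1$ line, and the normal bundle of $\Gamma$ is extracted from the same transition data $\alpha_1^n\vartheta_3=\alpha_0^n\vartheta_{n+3}$ giving the $\sO(-n)$ summand and the resolved node giving the $\sO(-1)$ summand. The explicit parametrisation $\vartheta_1=uv(1+u)$, $\vartheta_2=v(1+u)$, $\vartheta_{i+3}=(uv)^iw$ is a correct and useful cross-check but not a genuinely different argument.
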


\begin{center}\begin{tikzpicture}[scale=0.8]
   
   \begin{scope}[xshift = 10cm]
   \draw (0,4) -- (2,3) -- (4,4) (2,3) -- (2,1) (0,0) -- (2,1) -- (4,0);
   \draw (2,2) -- (3,2);
   \node at (3.5,2.5) {$\widetilde D_3^\star$};
   \node at (0.5,2.5) {$\widetilde D_4^\star$};
   \node at (2,3.8) {$\widetilde D_1^\star$};
   \node at (2,0.2) {$\widetilde D_2^\star$};
   \node at (2.7,1.7) {$\Gamma$};
   
   \node at (1,4.5) {$\widetilde Y^\star$};
   \end{scope}
   
   \draw[->] (8,2) to node[midway,above] {$\mu$} (5,2);
   
%
   
   \begin{scope}[xshift = 0cm]
   \draw (4,4) -- (2,2) -- (4,0);
   \draw[very thick] (0,2) -- (2,2);
   \node at (2,2) {$\bullet$};
   \node at (3.5,2.5) {$D_3^\star$};
   \node at (1,3) {$D_1^\star$};
   \node at (1,1) {$D_2^\star$};
   
   \node at (1,4.5) {$Y^\star$};
   \end{scope}
\end{tikzpicture}\end{center}

\begin{proof}
We first consider the birational morphism $\phi \colon W\subset \Aa^{n+3}\times \PP^1_{\alpha_0,\alpha_1} \to Y^\star$, where $W$ is defined by the minors of the following matrix.
\[ \begin{pmatrix}
\alpha_0 & \vartheta_1+\vartheta_2 & \vartheta_3 & \vartheta_4 & \cdots & \vartheta_{n+1} & \vartheta_{n+2} \\
\alpha_1 & \vartheta_1\vartheta_2  & \vartheta_4 & \vartheta_5 & \cdots & \vartheta_{n+2} & \vartheta_{n+3} 
\end{pmatrix}. \]
This is a small morphism, with a single exceptional curve $\Gamma\cong \PP^1$ in the fibre over $P^\star$. We note that these equations imply that $\alpha_1^n\vartheta_3 = \alpha_0^n\vartheta_{n+3}$. Let $s = \alpha_0\alpha_1^{-1}$ and $t = \alpha_0^{-1}\alpha_1$ be the coordinates on the two affine patches of $\PP^1_{\alpha_0,\alpha_1}$. Then $W$ is covered by two affine charts $W=U_0\cup U_1$, where $U_i = \{ \alpha_i\neq0 \}$ for $i=0,1$. If $\alpha_0\neq0$ then we can eliminate the variables $\vartheta_4,\ldots,\vartheta_{n+3}$ to see that
\[ U_0 \cong \left( \vartheta_1\vartheta_2 = t(\vartheta_1+\vartheta_2) \right)\subset \Aa^4_{t,\vartheta_1,\vartheta_2,\vartheta_3}, \]
and, similarly, if $\alpha_1\neq0$ then we can eliminate the variables $\vartheta_3,\ldots,\vartheta_{n+2}$ to see that
\[ U_1 \cong \left( s\vartheta_1\vartheta_2 = \vartheta_1+\vartheta_2 \right)\subset \Aa^4_{s,\vartheta_1,\vartheta_2,\vartheta_{n+3}}. \]
These two charts are glued together by the map
\[ f\colon U_0\dashrightarrow U_1, \quad f(t,\vartheta_1,\vartheta_2,\vartheta_3) = \left(s^{-1},\vartheta_1,\vartheta_2,s^n\vartheta_{n+3}\right). \]
The patch $U_1$ is smooth and the patch $U_0$ has a line of $A_1$ singularities along $\ell=\Aa^1_{\vartheta_3}$. It is easy to verify that the crepant blowup of $\ell \subset U_1$ resolves the line of singularities to give our resolution $\mu \colon \widetilde Y^\star \to Y^\star$. Consider the affine patch in the blowup of $\ell\subset U_1$ given by 
\[ V = \left( \vartheta_1'\vartheta_2' = \vartheta_1' + \vartheta_2' \right)\subset \Aa^4_{t,\vartheta_1',\vartheta_2',\vartheta_{n+3}} \]
where $\vartheta_1 = \vartheta_1't$ and $\vartheta_2 = \vartheta_2't$. The two charts $V$ and $U_0$ cover the curve $\Gamma$ and the transition map is given by
\[ f\colon V\dashrightarrow U_1, \quad f(t,\vartheta_1',\vartheta_2',\vartheta_3) = \left(s^{-1},s\vartheta_1,s\vartheta_2,s^n\vartheta_{n+3}\right) \]
so $\mathcal{I}_\Gamma/\mathcal{I}_\Gamma^2 = (\vartheta_1')\oplus(\vartheta_3) = (s\vartheta_1)\oplus(s^n\vartheta_{n+3})$, and therefore $\Gamma$ is a $(-1,-n)$-curve.
\end{proof}

\subsection{The mirror of a $(-2,0)$-flop?}
Mirror symmetry has been well-studied in the context of the Atiyah flop over an ordinary 3-fold double point, e.g.\ \cite{mirror-flop}. In this case, the cones $\sigma\subset N_\RR$ and $\sigma^\star\subset M_\RR$ appearing in \S\ref{sect!atiyah-flop} are isomorphic, meaning that both $Y=\Spec\CC[\sigma^\star\cap M]$ and $Y^\star=\Spec\CC[\sigma\cap N]$ are ordinary toric 3-fold double points. A small resolution $f\colon X\to Y$ can be viewed as a deformation of the symplectic structure on $Y$, and under mirror symmetry this corresponds to a smoothing $W$ of $Y^\star$ given by a deformation of the complex structure on $Y^\star$, with vanishing cycle a Lagrangian sphere $S^3$. Indeed a 3-fold \emph{conifold transition} is the composition $X\to Y\rightsquigarrow Z$ of a small contraction to an ordinary node, followed by a smoothing of that node. Morrison conjectured that mirror symmetry reverses the order of a conifold transition, i.e.\ that we get a conifold transition of mirrors $Z^\star\to Y^\star\rightsquigarrow X^\star$. This conjecture will be proved in upcoming work of Ruddat \& Siebert \cite{rs} by using similar methods to this paper, involving integral affine manifolds and the Gross--Siebert program. 

Since the mirror $Y^\star$ that we construct for a $(-2,0)$-flop is not isomorphic to the original singularity $Y$, there is no analogue of a conifold transition in our setting. However, starting on $X$ and flopping twice $X\dashrightarrow X' \dashrightarrow X$ induces a `flop-flop' functor $F\colon \mathcal D(X)\to \mathcal D(X)$. In the case of the Atiyah flop, this is a special kind of autoequivalence of the derived category of $X$ called a spherical twist, which corresponds under mirror symmetry to an autoequivalence on the Fukaya category of $W$ induced by a Dehn twist around the vanishing sphere in the degeneration $W\rightsquigarrow Y^\star$. The flop-flop functor induced by a $(-2,0)$-flop has been studied by Toda \cite{toda}, but unfortunately is no longer a spherical twist. However, it would be interesting to know if it is still possible to interpret the mirror of this flop-flop functor in terms of a smoothing of the mirror singularity $Y^\star$ that we have constructed.

\end{document}